\newtheorem{thm}{Theorem}[section]
\newtheorem{lem}{Lemma}[section]
\newtheorem{defn}{Definition}[section]
\newtheorem{ex}{Example}[section]
\newtheorem{rmk}{Remark}[section]
\newcommand{\Real}{{\mathbb R}}
\newcommand{\To}{\longrightarrow}
\newcommand{\toto}{\rightrightarrows}
\def\1{\^{\i}}
\def\2{\u{a}}
\def\3{\c{s}}
\def\4{\^{a}}
\def\5{\c{t}}
\def\a{\alpha}
\def\e{\epsilon}
\def\d{\delta}
\def\l{\lambda}
\def\<{\langle}
\def\>{\rangle}
\DeclareMathOperator*\dom{dom}
\DeclareMathOperator*\cl{cl}
\DeclareMathOperator*\co{co}
\DeclareMathOperator*\core{core}
\DeclareMathOperator*\inte{int}
\begin{document}

\title{Densely defined equilibrium problems}



\author{Szil\'ard L\' aszl\' o  \and Adrian Viorel}

\address{Szil\'ard L\' aszl\' o, 
             Department of Mathematics, Technical University of Cluj-Napoca,
              Str. Memorandumului nr. 28, 400114 Cluj-Napoca, Romania.}
              \email{laszlosziszi@yahoo.com}           
\address{Adrian Viorel,
             Department of Mathematics, Technical University of Cluj-Napoca,
              Str. Memorandumului nr. 28, 400114 Cluj-Napoca, Romania.}
              \email{adrian.viorel@math.utcluj.ro}

\maketitle

\begin{abstract}
In the present work we deal with set-valued equilibrium problems for which we provide sufficient conditions for the existence of a solution. The conditions that we consider are imposed not on the whole domain, but rather on a self segment-dense subset of it, a special type of dense subset. As an application, we obtain a generalized Debreu-Gale-Nika\"{\i}do-type theorem, with a considerably weakened Walras law in its hypothesis. Further, we consider a non-cooperative $n$-person game and prove the existence of a Nash equilibrium, under assumptions that are less restrictive than the classical ones.
\\
\\
{\bf Keywords:} {self segment-dense set; set-valued equilibrium problem; Debreu-Gale-Nika\"{\i}do-type theorem; Nash equilibrium}
\\
 {\bf  MSC}: {47H04; 47H05; 26B25; 26E25; 90C33}
\end{abstract}

\section{Introduction}

Equilibrium problems play an important role in nonlinear analysis especially due to
their implications in mathematical economics, and, due to its key  applications, Ky Fan's minimax
inequality is considered to be the most notable existence result in this field (see \cite{Fan1}. 

More recently, A. Krist\'aly and Cs. Varga \cite{KV} were able to prove two
set-valued versions of Ky Fan's inequality. Their results guarantee
the existence of solutions to set-valued equilibrium problems that
they have introduced, motivated by Browder's study of variational inclusions
\cite{Browder}.


The present work is devoted to set-valued equilibrium problems as
defined in \cite{KV}. More precisely, we show that the hypotheses of the
existence result of Krist\'aly and Varga (Theorems 2.1 and 2.2 in \cite{KV})
can be weakened in the sense that the convexity and continuity assumptions must not hold on
the whole domain, but just on a special type of dense subset of it that we
call \emph{self segment-dense}(see \cite{LaVi}).%

This new concept is related to, but different from that of a segment-dense
set introduced by Dinh The Luc \cite{DTL} in the context of densely
quasimonotone, respectively densely pseudomonotone operators. In one
dimension, the concepts of a segment-dense set respectively a self
segment-dense set are equivalent to the concept of a dense set. Nevertheless,
in dimension greater than one, self segment-dense subsets enjoy certain
special properties, characterized by Lemma \ref{l51}, which play a crucial
role in obtaining our existence results.

We explore the role of self segment-dense sets in the context of equilibrium problems both %
with and without compactness assumptions, and show how our abstract results can be applied.

The two applications that we have in mind concern the theory of economic equilibrium and game %
theory. In fact, we prove a result of Debreu-Gale-Nika\"{\i}do type (see \cite{Debreu},\cite{Gale},\cite{Nikaido}), that states the existence %
of an economic equilibrium even if the constraint imposed by Walras' law holds only on a %
self segment-dense subset of the price simplex. Our second result proves the existence of Nash %
equilibria for non-cooperative $n$-person games under assumptions that are more %
general than those of the classical theory (cf. \cite{Aubin}).

In an infinite dimensional real Hilbert space it is known that the unit sphere %
is dense in the unit ball with respect to the weak topology, but, as we will see, it is not %
self segment-dense. This is a typical example of a dense set that is not self segment-dense. %
Using this example, in Section 3, we argue that it is not enough to impose %
the convexity and continuity assumptions on a dense subset of the domain, and that %
it is essential to work with a self segment-dense subset.

The outline of the paper is the following. In the next section we formulate
the problems that we are dealing with and introduce the necessary apparatus.
We also define the notion of a self segment-dense set and show by an example
that it differs from the notion of a segment-dense set introduced in \cite%
{DTL}. Sections 3 and 4 contain the main result of our work, namely existence
results for both set-valued an single-valued equilibrium problems. The final two %
sections contain applications of our abstract results. In Section 5 %
we prove a generalized Debreu-Gale-Nika\"{\i}do-type theorem while in Section 6 %
we obtain the existence of a non-cooperative equilibrium. The corresponding %
results in \cite{Aubin} are generalized.


\section{Preliminaries}


In what follows $X$  and $Y$ denote  Hausdorff topological spaces.
For a non-empty set $D\subseteq X$, we denote by $\inte(D)$ its interior and
by $\cl(D)$ its closure. We say that $P\subseteq D$ is dense in $D$ if $%
D\subseteq \cl(P)$, and that $P\subseteq X$ is closed regarding $D$ if $\cl%
(P)\cap D=P\cap D.$

Let $T:X\rightrightarrows Y$ be a set-valued operator. We denote by $D(T)=\{x\in X:T(x)\neq \emptyset \}$ its domain and by $R(T)=\displaystyle\bigcup_{x\in D(T)}{T(x)}$ its range. The graph of the operator $T$ is the set $G(T)=\{(x,y)\in X\times Y:\,y\in T(x)\}.$

Recall that $T$ is said to be \emph{upper semicontinuous} at $x\in D(T)$ if
for every open set $N\subseteq Y$ containing $T(x)$, there exists a
neighborhood $M\subseteq X$ of $x$ such that $T(M)\subseteq N.$ $T$ is said
to be \emph{lower semicontinuous} at $x\in D(T)$ if for every open set $%
N\subseteq Y$ satisfying $T(x)\cap N\neq \emptyset $, there exists a
neighborhood $M\subseteq X$ of $x$ such that for every $y\in M\cap D(T)$ one
has $T(y)\cap N\neq \emptyset .$ $T$ is upper semicontinuous (lower
semicontinuous) on $D(T)$ if it is upper semicontinuous (lower
semicontinuous) at every $x\in D(T).$

With $T$ as before and $V\subseteq Y$, let us introduce the following sets
$$T^{-}(V)=\{x\in X:T(x)\cap V\neq\emptyset\},$$
and
$$T^{+}(V)=\{x\in X:T(x)\subseteq V\},$$
called the inverse image of $V$, respectively the core of $V.$

\begin{rmk}\rm\label{r21} Let $T:X\toto Y$ be a set valued map. The following characterizations of lower semicontinuity, respectively upper semicontinuity (see \cite{AF}) can easily be proved.

\begin{itemize}
\item[(i)]$T$ is lower semicontinuous at $x\in D(T)$ if and only
if for every net $(x_{\a})\subseteq D(T)$ such that $x_\a\longrightarrow
x$ and for every $x^*\in T(x)$ there exists a net $x^*_{\a}
\in T(x_\a)$ such that $x^*_{\a}\longrightarrow x^*.$
\item[(ii)] $T$ is upper semicontinuous at $x\in D(T)$ if and only
if for every net $(x_{\a})\subseteq D(T)$ such that $x_\a\longrightarrow
x$ ad every open set $V\subseteq Y$ such that $T(x)\subseteq V$ one has $F(x_\a)\subseteq V$ for sufficiently large $\a.$
\item[(iii)] $T$ is lower semicontinuous if and only if for all closed set $V\subseteq Y$,  one has $T^{+}(V)$ is closed in $X.$
\item[(iv)]  $T$ is upper semicontinuous if and only if for all closed set $V\subseteq Y$,  one has $T^{-}(V)$ is closed in $X.$
\end{itemize}
\end{rmk}

Obviously, when $T$ is single-valued, then upper semicontinuity and also
lower semicontinuity become the usual notion of continuity.

For a  function $f:X\longrightarrow\overline{{\mathbb{R}}}={\mathbb{R}}\cup\{+\infty\}$ we denote by $\dom f$ its domain, that is $\dom f=\{x\in X:f(x)\in \Real\}.$

We say that $f$ is upper semicontinuous at $x_0\in \dom f$ if for every $\e>0$ there exists a neighborhood  $U$ of $x_0$ such that $f(x)\le f(x_0)+\e$ for all $x\in U.$  The function $f$ is called upper semicontinuous if it is upper semicontinuous at every point of its domain.

Also, we say that $f$ is lower semicontinuous at $x_0\in dom f$ if for every $\e>0$ there exists a neighborhood  $U$ of $x_0$ such that $f(x)\ge f(x_0)-\e$ for all $x\in U.$
The function $f$ is called lower semicontinuous if it is lower semicontinuous at every point of its domain.
\begin{rmk}\rm\label{r22} Let $f:X\longrightarrow\overline{\mathbb{R}}$ be a function. Then, we have the following characterizations of the lower semicontinuity, respectively the upper semicontinuity of $f$:

\begin{itemize}
\item[(i)] $f$ is upper semicontinuous at $x_0,$ if and only if, $\limsup_{x^\a\to x_{0}} f(x^\a)\le f(x_0),$ where $(x^\a)$ is a net converging to $x_0.$

\item[(ii)] $f$ is lower semicontinuous at $x_0,$ if and only if, $\liminf_{x^\a\to x_{0}} f(x^\a)\ge f(x_0),$ where $(x^\a)$ is a net converging to $x_0.$

\item[(iii)] $f$ is upper semicontinuous on $X$, if and only if, the superlevel set $\{x\in X: f(x)\ge a\}$ is a closed set for every $a\in \Real.$

\item[(iv)] $f$ is lower semicontinuous on $X$, if and only if, the sublevel set $\{x\in X: f(x)\le a\}$ is a closed set for every $a\in \Real.$
\end{itemize}
\end{rmk}


\subsection{Set-valued equilibrium problems}


Let $X$ be a real normed space, let $K\subseteq X$ be a nonempty set and let $F:K\times K\rightrightarrows {%
\mathbb{R}}$ be a set valued map. According to \cite{KV} a set-valued
equilibrium problem consists in finding $x_0\in K$ such that
\begin{equation*}
F(x_0,y)\geq 0,\,\forall y\in K.
\end{equation*}%
Here $F(x,y)\geq 0$ means that $u\geq 0$ for all $u\in F(x,y),$ or, in other
words, that $F(x,y)\subseteq \lbrack 0,\infty )={\mathbb{R}}_{+}.$

A different set-valued equilibrium problem, also formulated in \cite{KV}, is
to find $x_0\in K$ such that
\begin{equation*}
F(x_0,y)\cap {\mathbb{R}}_-\neq\emptyset,\,\forall y\in K.
\end{equation*}

For the the convenience of the reader, we recall the original existence results %
of Krist\'aly and Varga regarding the two set-valued equilibrium problems.

\begin{thm}
\label{tKV1} Let $X$ be a real normed space, $K$ be a nonempty convex
compact subset of $X,$ and $F :K \times K \rightrightarrows {\mathbb{R}}$ a
set valued map satisfying

\begin{itemize}
\item[(i)] $\forall y\in K, x\longrightarrow F(x, y)$ is lower
semicontinuous on $K$,

\item[(ii)] $\forall x\in K, y\longrightarrow F(x, y)$ is convex on $K$,

\item[(iii)] $\forall x\in K, F(x,x)\ge 0.$
\end{itemize}

Then, there exists an element $x_0 \in K$ such that
\begin{equation*}
F(x_0, y)\ge 0, \,\forall y\in K.
\end{equation*}
\end{thm}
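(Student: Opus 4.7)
The plan is to follow the classical Ky Fan/KKM scheme adapted to set-valued targets. For each $y\in K$ define
$$A(y):=\{x\in K: F(x,y)\subseteq \mathbb{R}_+\},$$
so that the desired conclusion amounts to showing $\bigcap_{y\in K}A(y)\neq\emptyset$; any point in this intersection is a solution.

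First I would verify that each $A(y)$ is a nonempty closed subset of $K$. Setting $x=y$ in (iii) gives $y\in A(y)$, so $A(y)$ is nonempty. For closedness, consider the partial map $T_y := F(\cdot, y):K\toto\mathbb{R}$, which is lower semicontinuous by (i), and note that $A(y)=T_y^{+}(\mathbb{R}_+)$. Since $\mathbb{R}_+$ is closed, Remark \ref{r21}(iii) yields closedness of $A(y)$ in $K$, and compactness then follows from that of $K$.

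Next I would establish the KKM property: for any finite collection $\{y_1,\dots,y_n\}\subseteq K$ one has
$$\co\{y_1,\dots,y_n\}\subseteq \bigcup_{i=1}^{n}A(y_i).$$
Suppose, for contradiction, that $x=\sum_{i=1}^n\lambda_i y_i$ (with $\lambda_i\geq 0$, $\sum\lambda_i=1$) lies in none of the $A(y_i)$. Then for each $i$ there exists $u_i\in F(x,y_i)$ with $u_i<0$. Interpreting the convexity in (ii) in the natural set-valued sense, namely $\sum_{i} \lambda_i F(x,y_i)\subseteq F(x,\sum_{i} \lambda_i y_i)$ (equivalently, the graph of $y\mapsto F(x,y)$ is convex), one obtains $\sum_{i} \lambda_i u_i \in F(x,x)$. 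Since not all $\lambda_i$ vanish, $\sum_{i} \lambda_i u_i<0$, which contradicts (iii).

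With these two ingredients in place, the Ky Fan/KKM lemma applied to the closed family $\{A(y)\}_{y\in K}$ inside the compact convex set $K$ furnishes $\bigcap_{y\in K}A(y)\neq\emptyset$, as required. The delicate point throughout is really the KKM verification: the combinatorial argument fully depends on pinning down the right set-valued reading of \emph{convex} in (ii); with the graph-convex formulation above the proof runs cleanly, while the closedness step and the final invocation of the KKM lemma are routine.
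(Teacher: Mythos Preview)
Your proof is correct and follows the same Ky Fan/KKM scheme the paper employs in proving the more general Theorem~\ref{t31} (of which this statement is the special case $D=K$): define the fibers $G(y)=\{x\in K:F(x,y)\subseteq\mathbb{R}_+\}$, obtain their closedness from (i), verify the KKM property via the set-valued convexity (ii) together with (iii), and conclude by Ky Fan's lemma. The only cosmetic difference is that you invoke Remark~\ref{r21}(iii) for closedness while the paper argues directly with nets via Remark~\ref{r21}(i).
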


\begin{thm}
\label{tKV2} Let $X$ be a real normed space, $K$ be a nonempty convex
compact subset of $X,$ and $F :K \times K \rightrightarrows {\mathbb{R}}$ a
set valued map satisfying

\begin{itemize}
\item[(i)] $\forall y\in K, x\longrightarrow F(x, y)$ is upper
semicontinuous on $K$,

\item[(ii)] $\forall x\in K, y\longrightarrow F(x, y)$ is concave on $K$,

\item[(iii)] $\forall x\in K, F(x,x)\cap {\mathbb{R}}_-\neq\emptyset.$
\end{itemize}

Then, there exists an element $x_0 \in K$ such that
\begin{equation*}
F(x_0, y)\cap {\mathbb{R}}_-\neq\emptyset, \,\forall y\in K.
\end{equation*}
\end{thm}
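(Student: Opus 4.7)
The natural strategy is a Knaster--Kuratowski--Mazurkiewicz (KKM) argument applied to the set-valued map $\Phi : K \rightrightarrows K$ defined by
$$\Phi(y) := \{x \in K : F(x,y) \cap {\mathbb{R}}_- \neq \emptyset\}.$$
The conclusion of the theorem is equivalent to the non-emptiness of $\bigcap_{y \in K} \Phi(y)$, so everything reduces to showing that the family $\{\Phi(y) : y \in K\}$ has a common point.

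The first step I would carry out is to verify that each $\Phi(y)$ is closed in $K$, and therefore compact by compactness of $K$. Indeed, assumption (i) says that $x \mapsto F(x,y)$ is upper semicontinuous, and since ${\mathbb{R}}_-$ is closed, Remark \ref{r21}(iv) yields that $F(\cdot,y)^{-}({\mathbb{R}}_-) = \Phi(y)$ is closed. This step is straightforward and uses only (i).

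The heart of the proof is to verify the KKM property: for every finite family $\{y_1, \ldots, y_n\} \subseteq K$ and every convex combination $y = \sum_{i=1}^n \lambda_i y_i$, we must have $y \in \Phi(y_i)$ for at least one $i$. I would argue by contradiction. If $y \notin \Phi(y_i)$ for any $i$, then $F(y,y_i) \cap {\mathbb{R}}_- = \emptyset$, i.e., $F(y,y_i) \subseteq (0,\infty)$ for every $i$. Invoking the concavity hypothesis (ii) on $y' \mapsto F(y,y')$ in its set-valued form, namely the inclusion $F(y, \sum_i \lambda_i y_i) \subseteq \sum_i \lambda_i F(y,y_i)$, one would then conclude $F(y,y) \subseteq (0,\infty)$, in direct contradiction with (iii). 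Finally, having established that $\Phi$ is a KKM map with closed values on the compact convex set $K$, I would appeal to Ky Fan's KKM lemma to deduce $\bigcap_{y \in K} \Phi(y) \neq \emptyset$, and any $x_0$ in this intersection is the required solution.

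The step I expect to require the most care is the KKM verification, because it is entirely driven by the interpretation of set-valued concavity. One has to be sure that the notion of concavity adopted actually delivers the inclusion $F(y, \sum_i \lambda_i y_i) \subseteq \sum_i \lambda_i F(y,y_i)$: only in this direction does strict positivity of every element of each $F(y,y_i)$ propagate to $F(y,y)$, allowing the contradiction with (iii). The closedness of $\Phi(y)$ and the KKM lemma itself are then standard, so the entire burden of the argument is concentrated in translating assumption (ii) into the correct set-theoretic inclusion.
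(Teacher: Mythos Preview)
Your proposal is correct. Note, however, that the paper does not actually supply its own proof of Theorem~\ref{tKV2}: it is recalled verbatim from Krist\'aly--Varga \cite{KV} in the preliminaries, without argument. That said, your KKM approach---defining $\Phi(y)=\{x\in K:F(x,y)\cap\mathbb{R}_-\neq\emptyset\}$, establishing closedness via Remark~\ref{r21}(iv), verifying the KKM property by contradiction using the concavity inclusion~\eqref{e2}, and concluding with Ky Fan's lemma---is exactly the template the paper employs for its own generalizations, in particular Theorem~\ref{t511} (and, mutatis mutandis, Theorem~\ref{t31}). So your argument is both sound and fully aligned with the paper's methodology; specializing the proof of Theorem~\ref{t511} to $D=K$ recovers precisely what you wrote.
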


Obviously these results hold not only in real normed spaces but also in Hausdorff topological vector spaces. The convexity of a set-valued map
$F:D\subseteq X\rightrightarrows{\mathbb{R}}$, where $X$ is Hausdorff topological vector space, is understood in sense that for all $%
x_1,x_2,\ldots, x_n\in D$ and $\lambda_i\ge 0,\, i\in\{1,2,\ldots,n\},\,
\sum_{i=1}^n \lambda_i=1$ such that $\sum_{i=1}^n \lambda_i x_i\in D$ one
has
\begin{equation}\label{e1}
\sum_{i=1}^n \lambda_i F(x_i)\subseteq F\left(\sum_{i=1}^n \lambda_i
x_i\right).
\end{equation}
Here the usual Minkowski sum of sets is meant by the summation sign. To define concavity in the same setting, one replaces the last inclusion by
\begin{equation}\label{e2}
\sum_{i=1}^n \lambda_i F(x_i)\supseteq F\left(\sum_{i=1}^n \lambda_i
x_i\right)
\end{equation}

Note that in the definition of these notions we do not assume that $D$ is convex. 

The classical single-valued the equilibrium problem (see \cite{Fan}) for $\varphi :K
\times K \longrightarrow {\mathbb{R}}$ consists in finding $x_0\in K$ such
that

\begin{equation*}
\varphi(x_0,y)\ge 0,\,\forall y\in K.
\end{equation*}

We recall the famous existence result of to Ky Fan.

\begin{thm}
\label{tKF} Let $K$ be a nonempty convex compact subset of the Hausdorff topological vector space $X$ and let $\varphi
:K \times K \longrightarrow{\mathbb{R}}$ be a function satisfying

\begin{itemize}
\item[(i)] $\forall y\in K$ the function $x\longrightarrow \varphi(x,y)$ is
upper semicontinuous on $K,$

\item[(ii)] $\forall x\in K,$ the function $y\to\varphi(x,y)$ is quasiconvex on $%
K,$

\item[(iii)] $\forall x\in D, \varphi(x, x)\ge 0.$

Then, there exists an element $x_0\in K$ such that
\begin{equation*}
\varphi(x_0,y)\ge 0,\,\forall y\in K.
\end{equation*}
\end{itemize}
\end{thm}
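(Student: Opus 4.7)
The plan is to prove this classical result via the KKM (Knaster–Kuratowski–Mazurkiewicz) principle, which is the standard route to Ky Fan–type inequalities.

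First, for each fixed $y \in K$, I introduce the set
\[
 S(y) := \{x \in K : \varphi(x,y) \geq 0\}.
\]
Hypothesis (iii) immediately gives $y \in S(y)$, so $S(y)$ is nonempty. By hypothesis (i), the superlevel set $\{x \in K : \varphi(x,y) \geq 0\}$ is closed in $K$ (this is exactly Remark \ref{r22}(iii) applied to the partial function $x \mapsto \varphi(x,y)$), and since $K$ is compact, each $S(y)$ is in fact compact. The goal reduces to showing
\[
 \bigcap_{y \in K} S(y) \neq \emptyset,
\]
because any $x_0$ in this intersection satisfies the conclusion of the theorem.

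The key step is to verify that $\{S(y)\}_{y \in K}$ is a KKM family, i.e., for any finite collection $y_1, \dots, y_n \in K$ one has $\mathrm{conv}\{y_1,\dots,y_n\} \subseteq \bigcup_{i=1}^n S(y_i)$. Suppose, for contradiction, that some convex combination $x = \sum_{i=1}^n \lambda_i y_i$ (with $\lambda_i \geq 0$, $\sum_i \lambda_i = 1$) fails to lie in any $S(y_i)$, so $\varphi(x, y_i) < 0$ for every $i$. Let $c := \max_{1 \leq i \leq n}\varphi(x,y_i) < 0$. By the quasiconvexity of $y \mapsto \varphi(x,y)$ postulated in (ii), the sublevel set $\{y \in K : \varphi(x,y) \leq c\}$ is convex; since it contains each $y_i$, it also contains $x$, giving $\varphi(x,x) \leq c < 0$, which contradicts hypothesis (iii). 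Hence the KKM property holds.

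Finally, I invoke the KKM principle: in a Hausdorff topological vector space, a KKM family of closed sets, at least one of which is compact, has nonempty intersection. Since every $S(y)$ is a closed subset of the compact set $K$, this applies directly and yields some $x_0 \in \bigcap_{y \in K} S(y)$, which is the desired equilibrium point.

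The main subtlety lies in the KKM step, specifically in exploiting quasiconvexity rather than convexity; once one thinks of working with the common upper bound $c = \max_i \varphi(x,y_i)$, the convexity of the corresponding sublevel set gives the contradiction cleanly. The remaining ingredients (closedness of superlevel sets from upper semicontinuity, and the finite intersection property in a compact set) are routine.
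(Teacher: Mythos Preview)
Your proof is correct. Note, however, that the paper does not actually supply a proof of Theorem~\ref{tKF}; it is merely recalled as Ky Fan's classical existence result. That said, your argument via the KKM map $S(y)=\{x\in K:\varphi(x,y)\ge 0\}$ is exactly the methodology the paper uses to prove its own extensions of this theorem (see the proofs of Theorems~\ref{t31} and~\ref{t52}), so the approach is entirely in line with the paper's framework. One minor remark: hypothesis (iii) as stated in the paper reads ``$\forall x\in D$'', which is evidently a typo for ``$\forall x\in K$'' (there is no $D$ in this theorem); you have correctly interpreted it this way.
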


In subsequent sections, the notion of a KKM map and the well-known intersection Lemma due to Ky Fan (see \cite{Fan}) will be needed.

\begin{defn}
(Knaster-Kuratowski-Mazurkiewicz) Let $X$ be a Hausdorff topological vector
space and let $M\subseteq X.$ The application $G:M\rightrightarrows X$ is
called a KKM application if for every finite number of elements $%
x_1,x_2,\dots,x_n\in M$ one has $\co\{x_1,x_2,\ldots,x_n\}\subseteq %
\displaystyle\bigcup_{i=1}^n G(x_i).$
\end{defn}

\begin{lem}
Let $X$ be a Hausdorff topological vector space, $M\subseteq X$ and $%
G:M\rightrightarrows X$ be a KKM application. If $G(x)$ is closed for every $%
x\in M$, and there exists $x_{0}\in M,$ such that $G(x_{0})$ is compact,
then
\begin{equation*}
\bigcap_{x\in M}G(x)\neq \emptyset .
\end{equation*}
\end{lem}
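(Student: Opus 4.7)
The plan is to reduce the statement to the classical finite-dimensional KKM lemma (a consequence of Sperner's lemma) via a finite intersection property argument that is enabled by the compactness of $G(x_{0})$.

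First, since $G(x_{0})$ is compact and each set $G(x)$ is closed in $X$, the trace family $\{G(x)\cap G(x_{0}):x\in M\}$ is a family of closed subsets of the compact space $G(x_{0})$. By the finite intersection property characterization of compactness, $\bigcap_{x\in M}G(x)\neq\emptyset$ will follow once I show that every finite subfamily has nonempty intersection. It therefore suffices to fix an arbitrary finite collection $\{x_{1},x_{2},\ldots ,x_{n}\}\subseteq M$ (which I may enlarge to also contain $x_{0}$) and prove that $\bigcap_{i=1}^{n}G(x_{i})\neq\emptyset$.

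Second, set $C=\co\{x_{1},\ldots ,x_{n}\}$, a compact convex subset of $X$ lying in a finite-dimensional affine subspace. Define $H_{i}=G(x_{i})\cap C$ for $i=1,\ldots ,n$; each $H_{i}$ is closed in $C$. I next verify that $\{H_{i}\}_{i=1}^{n}$ remains a KKM family on $C$: for any subcollection $\{x_{i_{1}},\ldots ,x_{i_{k}}\}$, the KKM property of $G$ gives
\[
\co\{x_{i_{1}},\ldots ,x_{i_{k}}\}\subseteq \bigcup_{j=1}^{k}G(x_{i_{j}}),
\]
and since the left side already lies in $C$, intersecting with $C$ yields $\co\{x_{i_{1}},\ldots ,x_{i_{k}}\}\subseteq \bigcup_{j=1}^{k}H_{i_{j}}$.

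Third, because $C$ lies in a finite-dimensional subspace of the Hausdorff topological vector space $X$ (on which the induced topology agrees with the Euclidean one), I can identify $C$ with a continuous image of a standard simplex via the barycentric map and apply the classical finite-dimensional KKM lemma (proved from Sperner's lemma): a closed KKM covering of a simplex has nonempty total intersection. This produces a point $x^{\ast }\in \bigcap_{i=1}^{n}H_{i}\subseteq \bigcap_{i=1}^{n}G(x_{i})$, completing the finite intersection step and hence the proof.

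The main obstacle is essentially the combinatorial heart of the argument — the finite-dimensional KKM lemma — but since the paper treats it as classical, I invoke it directly rather than reprove it. The only other subtlety is the passage from an arbitrary convex hull in $X$ to a simplex, which is justified by the fact that finite-dimensional Hausdorff topological vector spaces are linearly homeomorphic to Euclidean space, so closedness and the KKM property transfer without issue.
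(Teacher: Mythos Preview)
Your argument is the standard correct proof of Ky Fan's intersection lemma. However, note that the paper does not give its own proof of this statement: it is quoted in the Preliminaries as a well-known result due to Ky Fan (cf.\ \cite{Fan}) and is used as a black box in the later sections. So there is no proof in the paper to compare against; your proposal simply supplies a valid justification for what the paper cites from the literature.
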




\subsection{Self segment-dense sets}


Le $X$ be a  Hausdorff topological vector space. We will  use the following notations for the open, respectively
closed, line segments in $X$ with the endpoints $x$ and $y$
\begin{eqnarray*}
(x,y) &=&\big\{z\in X:z=x+t(y-x),\,t\in (0,1)\big\}, \\
\lbrack x,y] &=&\big\{z\in X:z=x+t(y-x),\,t\in \lbrack 0,1]\big\}.
\end{eqnarray*}

In \cite{DTL}, Definition 3.4, The Luc has introduced the notion of a
so-called \emph{segment-dense} set. Let $V\subseteq X$ be a convex set. One
says that the set $U\subseteq V$ is segment-dense in $V$ if for each $x\in V$
there can be found $y\in U$ such that $x$ is a cluster point of the set $%
[x,y]\cap U.$

In what follows we present a denseness notion (see also \cite{LaVi}) which is slightly different from the
concept of The Luc presented above, but which is better suited for our needs.

\begin{defn}
\label{dd} Consider the sets $U\subseteq V\subseteq X$ and assume that $V$
is convex.
We say that $U$ is self segment-dense in $V$ if $U$ is dense in $V$ and
\begin{equation*}
\forall x,y\in U,\mbox{  the set }\lbrack x,y]\cap U\mbox{  is dense in }%
\lbrack x,y].
\end{equation*}
\end{defn}

\begin{rmk}\textrm{ Obviously in one dimension the concepts of a segment-dense set respectively a self
segment-dense set are equivalent to the concept of a dense set. }
\textrm{\ }
\end{rmk}

In what follows we provide an essential example of a self segment-dense set.

\begin{ex}
\textrm{(see also \cite{LYAK}, Example 3.1)\label{EEE} Let $V$ be the two
dimensional Euclidean space ${\mathbb{R}}^2$ and define $U$ to be the set
\begin{equation*}
U :=\{(p,q) \in{\mathbb{R}}^2 : p\in \mathbb{Q},\, q\in\mathbb{Q}\},
\end{equation*}
where $\mathbb{Q}$ denotes the set of all rational numbers. Then, it is
clear that $U$ is dense in ${\mathbb{R}}^2.$ On the other hand $U$ is not
segment-dense in ${\mathbb{R}}^2,$ since for $x=(0, \sqrt{2})\in {\mathbb{R}}%
^2 $ and for every $y=(p,q)\in U$, one has $[x, y] \cap U = \{y\}.$ }

\textrm{It can easily be observed that $U$ is self segment-dense in ${%
\mathbb{R}}^2$, since for every $x,y\in U$ $x=(p,q),\,y=(r,s)$ we have $%
[x,y]\cap U=\{(p+t(r-p),q+t(s-q)): t\in[0,1]\cap\mathbb{Q}\},$ which is
obviously dense in $[x,y].$ }
\end{ex}

To further circumscribe the notion of a slef segment-dense set we provide an example %
of a subset that is dense but not self segment-dense.

\begin{ex}\label{ex1}
\textrm{Let $X$ be an infinite dimensional real Hilbert space, it is known that the unit sphere %
$\left\{ x\in X:\left\Vert x\right\Vert =1\right\} $
is dense with respect to the weak topology in the unit ball $\left\{ x\in X:\left\Vert x\right\Vert \leq1 \right\}$, but %
it is obviously not self segment-dense since any segment with endpoints on the sphere does not intersect the sphere in any other points.}
\end{ex}

\begin{rmk}
\textrm{Note that every dense convex subset of a Banach space is self
segment-dense. In particular dense subspaces and dense affine subsets are
self segment-dense. }
\end{rmk}


\section{Self segment-dense sets and equilibrium problems}


In this section, by making use of the concept of a self segment-dense set,
we obtain existence results for set-valued equilibrium problems. Ky Fan's
lemma is used in the proof of Theorem \ref{t31} and Theorem \ref{t511}, the main results of this
section, in order to establish the existence of solutions to equilibrium
problems. This approach is well known in the literature, see, for instance,
\cite{KV,DTL,L1,LYY,Y1,Y2}.

The following lemma gives an interesting characterization of self
segment-dense sets and will be used in the sequel. If $X$ is a Hausdorff locally convex topological vector space, %
then the origin has a local base of convex, balanced and absorbent
sets, and recall, that the set
\begin{equation*}
\core D=\{u\in D| \ \forall x\in X \ \exists \delta>0\mbox{ such that }%
\forall \epsilon\in[0,\delta]:u+\epsilon x\in D\}
\end{equation*}
is called the algebraic interior (or core) of $D\subseteq X$ (see \cite{Zal-carte}).

If $D$ is convex with nonempty interior, then $\inte(D)=\core(D)$ (see \cite%
{Zal-carte}).

\begin{lem}
\label{l51} Let $X$ be a Hausdorff locally convex topological vector space,
let $V\subseteq X$ be a convex set and let $U\subseteq V$ a self
segment-dense set in $V.$ Then for all finite subset $\{u_1,u_2,\ldots,u_n\}%
\subseteq U$ one has $$\cl(\co\{u_1,u_2,\ldots,u_n\}\cap U)=\co%
\{u_1,u_2,\ldots,u_n\}.$$
\end{lem}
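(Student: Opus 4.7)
The plan is to prove the two inclusions of the set equality separately. The inclusion $\cl(\co\{u_1,\ldots,u_n\}\cap U)\subseteq \co\{u_1,\ldots,u_n\}$ is the easy direction: since the convex hull of a finite set in a Hausdorff topological vector space is the continuous image of the standard compact simplex, it is compact and hence closed, so taking the closure of its intersection with $U$ does not escape it.

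For the nontrivial inclusion $\co\{u_1,\ldots,u_n\}\subseteq \cl(\co\{u_1,\ldots,u_n\}\cap U)$ I would argue by induction on $n$. The case $n=1$ is immediate since $\co\{u_1\}=\{u_1\}\subseteq U$, and the case $n=2$ is precisely the defining property of self segment-density, applied to the pair $u_1,u_2\in U$.

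For the inductive step, assume the claim for $n-1$ points. Fix $x\in\co\{u_1,\ldots,u_n\}$ and an arbitrary open neighborhood $W$ of $x$; it suffices to produce a point of $W\cap\co\{u_1,\ldots,u_n\}\cap U$. Write $x=\lambda u_n+(1-\lambda)y$ with $\lambda\in[0,1]$ and $y\in\co\{u_1,\ldots,u_{n-1}\}$; the case $\lambda=1$ is trivial, so assume $\lambda<1$. Using continuity of the affine map $z\mapsto \lambda u_n+(1-\lambda)z$, pick an open neighborhood $W_1$ of $y$ whose image under this map is contained in $W$. By the induction hypothesis applied to $u_1,\ldots,u_{n-1}$, we have $y\in\cl(\co\{u_1,\ldots,u_{n-1}\}\cap U)$, so there exists $y'\in W_1\cap\co\{u_1,\ldots,u_{n-1}\}\cap U$. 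Then the point $x':=\lambda u_n+(1-\lambda)y'$ lies in $W\cap[u_n,y']$. Since $u_n,y'\in U$, the self segment-dense property guarantees that $[u_n,y']\cap U$ is dense in $[u_n,y']$, so we can find $z\in[u_n,y']\cap U$ lying in the open set $W$. Because $y'\in\co\{u_1,\ldots,u_{n-1}\}$, the segment $[u_n,y']$ is contained in $\co\{u_1,\ldots,u_n\}$, and thus $z\in W\cap\co\{u_1,\ldots,u_n\}\cap U$, as required.

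The main obstacle, and where the precise form of Definition \ref{dd} is crucial, is that self segment-density provides density only along segments whose \emph{both} endpoints lie in $U$. A naive attempt to approximate $x$ directly by picking a nearby point of $U$ (which mere denseness would allow, as Example \ref{ex1} warns) would fail to keep the approximant inside the convex hull. The two-stage construction above sidesteps this: first reduce the dimension by approximating $y$ in $U$ using the induction hypothesis, then exploit the segment density in the last coordinate to bring the approximant both into $U$ and into the original convex hull at the same time.
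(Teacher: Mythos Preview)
Your proof is correct and follows the same inductive strategy as the paper's: decompose an arbitrary point of $\co\{u_1,\ldots,u_n\}$ as a convex combination of $u_n$ and a point of $\co\{u_1,\ldots,u_{n-1}\}$, approximate the latter inside $U$ via the induction hypothesis, and then invoke self segment-density on the resulting segment through $u_n$. Your direct argument, which uses only that $W\cap[u_n,y']$ is relatively open in the segment, is in fact slightly cleaner than the paper's contradiction-based version, which relies on balanced convex neighborhoods and the identity $\inte=\core$ for open convex sets.
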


\begin{proof} We prove the statement by classical induction. For $n=2$ by using the self segment-denseness of $U$ in $V$ we have that for every $u_1,u_2\in U$ $$\cl(\co\{u_1,u_2\}\cap U)=\cl([u_1,u_2]\cap U)=[u_1,u_2]=\co\{u_1,u_2\}.$$

Assume that the statement holds for every $u_1,u_2,\ldots, u_{n-1}\in U$ and we show that is also true for  all $u_1,u_2,\ldots, u_{n}\in U.$
For this let us fix $u_1,u_2,\ldots, u_{n-1}\in U$ and let $u_n\in U.$ Obviously one should take $u_n$ such that $\co\{u_1,u_2,\ldots,u_n\}\neq \co\{u_1,u_2,\ldots,u_{n-1}\}.$ In this case
$$\co\{u_1,u_2,\ldots,u_n\}=\bigcup_{u\in\co\{u_1,u_2,\ldots,u_{n-1}\}}[u,u_n].$$

We must show that $\co\{u_1,u_2,\ldots,u_n\}\cap U$  is  dense in $\co\{u_1,u_2,\ldots,u_n\}.$

Assume the contrary, that is, there exists $s\in \co\{u_1,u_2,\ldots,u_n\}$ and an neighbourhood $S$ of $s$ such that $S\cap  \co\{u_1,u_2,\ldots,u_n\}$ contains no points from $U.$ Obviously, we can take $S=s+G$, where $G$ is an open, balanced and convex neighbourhood of the origin. Note that we have $s=u_n+t(u-u_n)$ for some $t\in(0,1)$, $u\in \co\{u_1,u_2,\ldots,u_{n-1}\}.$

By the induction hypothesis, $\co\{u_1,u_2,\ldots,u_{n-1}\}\cap U$  is  dense in $\co\{u_1,u_2,\ldots,u_{n-1}\},$ hence,  there exists a net $(u^\a)\subseteq \co\{u_1,u_2,\ldots,u_{n-1}\}\cap U$ such that $\lim u^\a= u.$ Thus, for $u+G$ a neighbourhood of $u$ there exists $\a_0$ such that $u^\a\in u+G$ for all $\a>\a_0.$

We show next, that for $u^\a\in u+G$ we have $s^\a=u_n+t(u^\a-u_n)\in s+G.$

Indeed $(u^\a-u_n)\in (u-u_n)+G$ and since $G$ is balanced and $t\in(0,1)$ we have that $t(u^\a-u_n)\in t(u-u_n)+G$. Hence  $s^a=u_n+t(u^\a-u_n)\in u_n+t(u-u_n)+G= s+G.$

Note that $s+G$ is open and convex, hence $s+G=\core(s+G),$ which shows that $s^\a\in \core(s+G).$ Therefore, there exists  $\d>0$ such that $s^a+\e (u_n-u^\a)\in s+G,$ respectively $s^\a+\e (u^\a-u_n)\in s+G$ for all $\e\in [0,\d].$ Hence, $s^\a\in [s^\a+\e (u^\a-u_n),s^\a+\e (u_n-u^\a)]\subseteq [u^\a,u_n]$ for all $\e\in [0,\d].$ Since $u^\a,u_n\in U$ and $U$ is self segment-dense, obviously $[s^\a+\e (u^\a-u_n),s^\a+\e (u_n-u^\a)]\cap U\neq  \emptyset$ for all $\e\in (0,\d],$ which leads to $$(s+G)\cap \co\{u_1,u_2,\ldots, u_n\}\cap U\neq\emptyset,$$ which yields a contradiction.
\end{proof}

\begin{rmk}
\textrm{In Lemma \ref{l51}, the assumption that $U$ is self segment-dense
cannot be replaced by the denseness of $U$ as the next example shows. }
\end{rmk}

\begin{ex}
\textrm{Let $V$ be the unit ball in ${\mathbb{R}}^{3}$, let $A$ be the
interior of a square with vertices $(-1,0,0),$ $(0,-1,0),(1,0,0)$ respectively $%
(0,1,0).$ Then obviously $U=V\setminus A$ is dense in $V$ but not self
segment-dense in $V,$ since, for instance, for $u_{1}=(\frac{3}{5},\frac{3}{5%
},0)\in U$ and $u_{2}=(-\frac{3}{5},-\frac{3}{5},0)\in U$ the set $%
[u_{1},u_{2}]\cap U$ is not dense in $[u_{1},u_{2}].$ This also shows, that $%
\cl(\co\{u_{1},u_{2}\}\cap U)\neq \co\{u_{1},u_{2}\}.$ }
\end{ex}

The next result gives an important application of self segment-dense sets in
the framework of equilibrium problems presented above.

\begin{thm}
\label{t31} Let $X$ be a Hausdorff locally convex topological vector space,
let $K$ be a nonempty convex compact subset of $X,$ let $D\subseteq K$ be a
self segment-dense set, and let $F :K \times K \rightrightarrows {\mathbb{R}}
$ be a set valued map satisfying

\begin{itemize}
\item[(i)] $\forall y\in D, x\longrightarrow F(x, y)$ is lower
semicontinuous on $K$,

\item[(ii)] $\forall x\in K, y\longrightarrow F(x, y)$ is lower
semicontinuous on $K\setminus D$,

\item[(iii)] $\forall x\in D, y\longrightarrow F(x, y)$ is convex on $D$,

\item[(iv)] $\forall x\in D, F(x,x)\ge 0.$
\end{itemize}

Then, there exists an element $x_0 \in K$ such that
\begin{equation*}
F(x_0, y)\ge 0, \,\forall y\in K.
\end{equation*}
\end{thm}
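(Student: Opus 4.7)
The plan is to adapt the KKM approach of Krist\'aly--Varga's Theorem \ref{tKV1}, compensating for the fact that the convexity hypothesis (iii) and the semicontinuity hypothesis (i) hold only on the self segment-dense subset $D$ rather than on all of $K$. I would define the set-valued map $G:D\rightrightarrows K$ by
\[
G(y)=\{x\in K:F(x,y)\ge 0\},\qquad y\in D,
\]
and aim to exhibit some $x_0\in\bigcap_{y\in D}G(y)$ via Ky Fan's intersection lemma. Closedness of $G(y)$ for $y\in D$ follows from (i) in the standard way: if $x_\alpha\to x$ with $F(x_\alpha,y)\subseteq[0,\infty)$, the existence of some negative $u\in F(x,y)$ would, via the lower semicontinuity of $F(\cdot,y)$ at $x$, produce elements of $F(x_\alpha,y)$ in $(-\infty,0)$, a contradiction. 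Compactness of $K$ then makes each $G(y)$ compact.

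The crux of the argument is the KKM property. Fix $y_1,\dots,y_n\in D$ and let $y=\sum_{i=1}^n\lambda_iy_i$ be a convex combination. In the easy case $y\in D$, hypotheses (iii) and (iv) give
\[
\sum_{i=1}^n\lambda_iF(y,y_i)\subseteq F(y,y)\subseteq[0,\infty),
\]
and if no $F(y,y_i)$ sits inside $[0,\infty)$ one could pick a negative element $u_i\in F(y,y_i)$ for each $i$, producing $\sum\lambda_iu_i<0$ in the Minkowski sum, which is impossible; hence $y\in G(y_i)$ for some $i$. When $y\notin D$, Lemma \ref{l51} supplies the missing ingredient: it ensures that $\co\{y_1,\dots,y_n\}\cap D$ is dense in $\co\{y_1,\dots,y_n\}$, so $y$ is the limit of a net $(y^\alpha)\subseteq\co\{y_1,\dots,y_n\}\cap D$. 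By the case just treated, each $y^\alpha$ lies in $G(y_{i_\alpha})$ for some $i_\alpha\in\{1,\dots,n\}$; extracting a subnet with constant index $i^*$ and invoking (i) (applicable since $y_{i^*}\in D$), the inclusion $F(y^\alpha,y_{i^*})\subseteq[0,\infty)$ passes to the limit and yields $y\in G(y_{i^*})$.

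With the KKM property in hand, Ky Fan's lemma furnishes $x_0\in\bigcap_{y\in D}G(y)$, that is, $F(x_0,y)\ge 0$ for every $y\in D$. To extend this to $y\in K\setminus D$, I would pick a net $y^\alpha\in D$ converging to $y$ (using the density of $D$ in $K$); then $F(x_0,y^\alpha)\subseteq[0,\infty)$ for each $\alpha$, and hypothesis (ii) --- lower semicontinuity of $F(x_0,\cdot)$ at $y\in K\setminus D$ --- again rules out any negative element of $F(x_0,y)$ by the same net argument used for closedness of $G(y)$.

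I expect the main difficulty to be the subcase $y\notin D$ of the KKM verification: convexity of $F$ in the second variable is postulated only on $D$, and the first argument $y$ of $F(y,y_i)$ need not lie in $D$ either, so (iii) cannot be invoked at all. Lemma \ref{l51} is precisely what rescues this step, and the failure of the analogous density property for merely dense (as opposed to self segment-dense) subsets, illustrated in Example \ref{ex1}, explains why the stronger denseness notion is indispensable here.
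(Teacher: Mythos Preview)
Your proposal is correct and follows essentially the same route as the paper: define $G(y)=\{x\in K:F(x,y)\ge 0\}$ for $y\in D$, establish closedness via (i), verify the KKM property using (iii), (iv) together with Lemma~\ref{l51}, apply Ky Fan's lemma, and then extend from $D$ to $K$ via (ii). The only cosmetic difference is that where you handle the case $y\in\co\{y_1,\dots,y_n\}\setminus D$ by extracting a subnet with constant index $i^*$, the paper argues more compactly by taking closures: from $\co\{y_1,\dots,y_n\}\cap D\subseteq\bigcup_{i=1}^n G(y_i)$ and the closedness of each $G(y_i)$ one gets $\cl(\co\{y_1,\dots,y_n\}\cap D)\subseteq\bigcup_{i=1}^n G(y_i)$, and Lemma~\ref{l51} identifies the left-hand side with $\co\{y_1,\dots,y_n\}$.
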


\begin{proof}
Consider the map $$G:D\toto K,\, G(y)=\{x\in K:F(x,y)\ge 0\}.$$ We show that $\bigcap_{y\in  D}G(y)\neq\emptyset,$ or, in other words, that there exists $x_0\in K$ such that $$F(x_0,y)\ge 0,\,\forall y\in D.$$
We start by showing that $G(y)$ is closed for all $y\in D.$ To this end we fix $y\in D$ and consider the net $(x^\a)\subseteq G(y),\, \lim x^\a=x\in K.$
%
%
%
According to Remark \ref{r21}, from the lower semicontinuity assumption $(i)$ one has that
for every $x^*\in F(x,y)$ there exists a net $x^*_\a \in F(x^\a,y)$ such that $x^*_\a\longrightarrow x^*.$ On the other hand $x^*_\a\ge 0$ for all $\a$, hence $x^*\ge 0.$ Thus, $F(x,y)\ge 0$ which shows that   $x\in G(y)$ and the set $G(y)\subseteq K$ is closed.

Since $K$ is compact, we also have that $G(y)$ is compact for all $y\in D.$

Next, we show that $G$ is a KKM mapping. In fact, we prove by a contradiction argument that that given arbitrary $y_1,y_2,\ldots,y_n\in D,$ $$\co\{y_1,y_2,\ldots,y_n\}\cap D\subseteq\bigcup_{i=1}^n G(y_i).$$
So, assume the contrary, that there exist $\l_1,\l_2,\ldots,\l_n\ge 0,\,\sum_{i=1}^n\l_i=1$ such that $\sum_{i=1}^n\l_i y_i\in D$ and $$\sum_{i=1}^n\l_i y_i\not\in\bigcup_{i=1}^n G(y_i).$$
This is equivalent with $F(\sum_{i=1}^n\l_i y_i,y_i)\cap (-\infty,0)\neq \emptyset,\,\forall i\in\{1,2,\ldots,n\},$ and hence,
$$\sum_{i=1}^n \l_iF\left(\sum_{i=1}^n\l_i y_i,y_i\right)\cap (-\infty,0)\neq \emptyset.$$  Since by assumption $(iii)$ $\forall x\in D$ the mapping $y\To F(x, y)$ is convex on $D$ we have $$\sum_{i=1}^n \l_iF\left(\sum_{i=1}^n\l_i y_i,y_i\right)\subseteq F\left(\sum_{i=1}^n\l_i y_i, \sum_{i=1}^n\l_i y_i\right )\ge0,$$
 or equivalently 
 $$\sum_{i=1}^n \l_iF\left(\sum_{i=1}^n\l_i y_i,y_i\right)\subseteq [0,\infty),$$ which contradicts our initial assumption.
Consequently, $$\co\{y_1,y_2,\ldots,y_n\}\cap D\subseteq\bigcup_{i=1}^n G(y_i),$$ holds true, and leads to
 $$\cl(\co\{y_1,y_2,\ldots,y_n\}\cap D)\subseteq\cl\left(\bigcup_{i=1}^n G(y_i)\right).$$

Furthermore, since $G(y_i)$ is closed for all $i\in\{1,2,\ldots,n\}$ we have
$$\cl\left(\bigcup_{i=1}^n G(y_i)\right)=\bigcup_{i=1}^n G(y_i).$$
On the other hand, according to Lemma \ref{l51} $\cl(\co\{y_1,y_2,\ldots,y_n\}\cap D)=\co\{y_1,y_2,\ldots,y_n\},$ so
 $$\co\{y_1,y_2,\ldots,y_n\}\subseteq\bigcup_{i=1}^n G(y_i).$$ Hence, $G$ is a KKM map.

Thus, according to Ky Fan's lemma $\bigcap_{y\in  D}G(y)\neq\emptyset.$ In other words, there exists $x_0\in K$ such that $F(x_0,y)\ge 0$ for all $y\in D.$

At this point we make use of the assumption $(ii)$ to extend the previous statement to the whole set $K$. Consider $y\in K\setminus D,$ since $D$ is dense in $K$, there exists a net $(y^\a)\subseteq D$ such that $\lim y^a=y.$ Now, due to the assumption $(ii)$ and Remark \ref{r21}, for every $y^*\in F(x_0,y)$ there exists a net $y^*_\a\in F(x_0,y^\a)$ such that $\lim y^*_\a=y^*.$ But obviously $y^*_\a\ge 0$, hence $y^*\ge 0,$ and finally $F(x_0,y)\ge 0,\,\forall y\in K.$
\end{proof}

In the above Theorem, one can replace $F$ by $-F$ and obtain a
result concerning the opposite inequalities.

\begin{thm}
\label{rt51} Let $X$ be a Hausdorff locally convex topological vector space,
let $K$ be a nonempty convex compact subset of $X,$ let $D\subseteq K$ be a
self segment-dense set, and let $F :K \times K \rightrightarrows {\mathbb{R}}
$ be a set valued map satisfying

\begin{itemize}
\item[(i)] $\forall y\in D, x\longrightarrow F(x, y)$ is lower
semicontinuous on $K$,

\item[(ii)] $\forall x\in K, y\longrightarrow F(x, y)$ is lower
semicontinuous on $K\setminus D$,

\item[(iii')] $\forall x\in D, y\longrightarrow F(x, y)$ is convex on $D$,

\item[(iv')] $\forall x\in D, F(x,x)\le 0.$
\end{itemize}

Then, there exists an element $x_0 \in K$ such that
\begin{equation*}
F(x_0, y)\le 0, \,\forall y\in K.
\end{equation*}
\end{thm}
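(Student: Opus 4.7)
The plan is to mirror the argument from Theorem \ref{t31} step by step, with all inequalities flipped; note that the stated hypotheses are \emph{not} those obtained from Theorem \ref{t31} by replacing $F$ with $-F$ (that substitution would swap lower and upper semicontinuity, as well as convexity and concavity), so a direct, parallel proof is needed. Define
\[
G : D \toto K, \quad G(y) = \{x \in K : F(x,y) \le 0\},
\]
and aim at showing $\bigcap_{y \in D} G(y) \ne \emptyset$ via Ky Fan's lemma; then extend the resulting inequality from $D$ to all of $K$ by denseness.

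The first step is to check that each $G(y) = \{x : F(x,y) \subseteq (-\infty,0]\}$ is closed, hence compact since $K$ is compact. For a net $x^\a \to x$ with $F(x^\a, y) \subseteq (-\infty,0]$, assumption (i) and the characterization in Remark \ref{r21} yield, for each $x^* \in F(x,y)$, a net $x^*_\a \in F(x^\a, y)$ with $x^*_\a \to x^*$; since each $x^*_\a \le 0$, $x^* \le 0$, so $x \in G(y)$. For the KKM property, take $y_1, \dots, y_n \in D$ and $\l_i \ge 0$ with $\sum \l_i = 1$ and $y_0 := \sum \l_i y_i \in D$, and argue by contradiction. If $y_0 \notin \bigcup_i G(y_i)$, each $F(y_0, y_i)$ meets $(0, \infty)$, so one may choose $u_i \in F(y_0, y_i)$ with $u_i > 0$. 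Then $\sum \l_i u_i > 0$ (at least one $\l_i$ is positive), whereas convexity (iii') applied at $y_0 \in D$ and (iv') give
\[
\sum \l_i u_i \in \sum \l_i F(y_0, y_i) \subseteq F(y_0, y_0) \subseteq (-\infty, 0],
\]
a contradiction. This establishes $\co\{y_1,\dots,y_n\} \cap D \subseteq \bigcup_i G(y_i)$; taking closures, using that a finite union of closed sets is closed, and invoking Lemma \ref{l51} to identify $\cl(\co\{y_1,\dots,y_n\} \cap D)$ with $\co\{y_1,\dots,y_n\}$, we obtain the full KKM inclusion. Ky Fan's lemma then produces $x_0 \in K$ with $F(x_0, y) \le 0$ for every $y \in D$.

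The extension step uses assumption (ii): for $y \in K \setminus D$, pick a net $y^\a \in D$ with $y^\a \to y$, available by denseness of $D$ in $K$; for any $y^* \in F(x_0, y)$, Remark \ref{r21} provides a net $y^*_\a \in F(x_0, y^\a) \subseteq (-\infty,0]$ with $y^*_\a \to y^*$, forcing $y^* \le 0$ and hence $F(x_0, y) \le 0$. The main technical hurdle, exactly as in Theorem \ref{t31}, is the upgrade of the KKM-style inclusion from $\co\{y_1,\dots,y_n\} \cap D$ to all of $\co\{y_1,\dots,y_n\}$, which is precisely where the self segment-density hypothesis enters through Lemma \ref{l51}; as Example \ref{ex1} indicates, mere denseness of $D$ would not suffice here.
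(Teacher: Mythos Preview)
Your direct argument is correct and mirrors the proof of Theorem~\ref{t31} with all signs reversed. However, your opening remark is mistaken: the substitution $F\mapsto -F$ \emph{does} transform the hypotheses of Theorem~\ref{t31} into those of Theorem~\ref{rt51}. The point you miss is that for set-valued maps the notions in play behave differently from their single-valued analogues. Since $u\mapsto -u$ is a homeomorphism of $\mathbb{R}$, the map $x\mapsto -F(x,y)$ is lower semicontinuous if and only if $x\mapsto F(x,y)$ is (and likewise for upper semicontinuity); and because $-\bigl(\sum_i\lambda_i A_i\bigr)=\sum_i\lambda_i(-A_i)$ while set inclusion is preserved under negation, the convexity condition~\eqref{e1} holds for $-F$ exactly when it holds for $F$. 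Finally $(-F)(x,x)\ge 0\Longleftrightarrow F(x,x)\le 0$. Thus the paper's proof is the single line ``apply Theorem~\ref{t31} to $-F$,'' and your longer argument merely unfolds that reduction.
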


By similar methods to those used in the proof of Theorem \ref{t31} one can
obtain a result concerning the second set-valued equilibrium problem. The
following theorem holds.

\begin{thm}
\label{t511} Let $X$ be a Hausdorff locally convex topological vector space,
let $K$ be a nonempty convex compact subset of $X,$ let $D\subseteq K$ be a
self segment-dense set, and let $F :K \times K \rightrightarrows {\mathbb{R}}
$ be a set valued map satisfying

\begin{itemize}
\item[(i)] $\forall y\in D, x\longrightarrow F(x, y)$ is upper
semicontinuous on $K$,

\item[(ii)] $\forall x\in K, y\longrightarrow F(x, y)$ is upper
semicontinuous on $K\setminus D$,

\item[(iii)] $\forall x\in D, y\longrightarrow F(x, y)$ is concave on $D$,

\item[(iv)] $\forall x\in D, F(x,x)\cap {\mathbb{R}}_+\neq\emptyset.$
\end{itemize}

Then, there exists an element $x_0 \in K$ such that
\begin{equation*}
F(x_0, y)\cap {\mathbb{R}}_+\neq\emptyset, \,\forall y\in K.
\end{equation*}
\end{thm}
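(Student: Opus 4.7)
The plan is to recycle the architecture of the proof of Theorem \ref{t31}, with the signs and the semicontinuity mode flipped. Define the candidate KKM map
$$G:D\toto K,\quad G(y)=\{x\in K:F(x,y)\cap\mathbb{R}_+\neq\emptyset\},$$
and aim to apply Ky Fan's lemma to conclude $\bigcap_{y\in D}G(y)\neq\emptyset$. The closedness of each $G(y)$ is immediate from (i): since $x\mapsto F(x,y)$ is upper semicontinuous on $K$ and $\mathbb{R}_+$ is closed, Remark \ref{r21}(iv) identifies $G(y)$ with $F(\cdot,y)^{-}(\mathbb{R}_+)$, a closed subset of the compact set $K$, hence compact.

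To see that $G$ is a KKM map, I first establish the relative inclusion $\co\{y_1,\ldots,y_n\}\cap D\subseteq\bigcup_{i=1}^n G(y_i)$ by contradiction: if $z=\sum\l_i y_i\in D$ missed every $G(y_i)$, then each $F(z,y_i)\subseteq(-\infty,0)$, so any Minkowski combination $\sum\l_i F(z,y_i)\subseteq(-\infty,0)$. The concavity assumption (iii) applied to $y\mapsto F(z,y)$ yields $F(z,z)\subseteq\sum\l_i F(z,y_i)\subseteq(-\infty,0)$, contradicting (iv). Passing to closures and invoking Lemma \ref{l51} together with the closedness of each $G(y_i)$ promotes this to
$$\co\{y_1,\ldots,y_n\}=\cl\bigl(\co\{y_1,\ldots,y_n\}\cap D\bigr)\subseteq\bigcup_{i=1}^n G(y_i),$$
so $G$ is a genuine KKM map. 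Ky Fan's lemma then produces $x_0\in K$ with $F(x_0,y)\cap\mathbb{R}_+\neq\emptyset$ for every $y\in D$.

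The step I expect to be the main obstacle is the extension of this conclusion from $D$ to all of $K$, because the net-selection argument used in Theorem \ref{t31} relies on lower semicontinuity and does not transfer to the upper semicontinuous setting. My plan is to argue by contradiction: for $y\in K\setminus D$, pick a net $(y^\a)\subseteq D$ with $y^\a\to y$ using density, and suppose $F(x_0,y)\cap\mathbb{R}_+=\emptyset$, so $F(x_0,y)\subseteq(-\infty,0)$. Since $(-\infty,0)$ is open, the upper semicontinuity of $y\mapsto F(x_0,y)$ at $y$ granted by (ii), read via the net characterization in Remark \ref{r21}(ii), forces $F(x_0,y^\a)\subseteq(-\infty,0)$ for all sufficiently large $\a$, contradicting the established $F(x_0,y^\a)\cap\mathbb{R}_+\neq\emptyset$. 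This yields $F(x_0,y)\cap\mathbb{R}_+\neq\emptyset$ and closes the argument.
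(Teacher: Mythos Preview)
Your proof is correct and follows essentially the same approach as the paper: the same KKM map $G(y)=\{x\in K:F(x,y)\cap\mathbb{R}_+\neq\emptyset\}$, closedness via Remark~\ref{r21}(iv), the KKM property via concavity and Lemma~\ref{l51}, and extension to $K\setminus D$ by contradiction using upper semicontinuity and the openness of $(-\infty,0)$. The only cosmetic difference is that the paper phrases the final step with a neighborhood $U$ of $y$ intersecting $D$, while you use the equivalent net formulation from Remark~\ref{r21}(ii); you also spell out the KKM verification that the paper merely refers back to Theorem~\ref{t31} for.
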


\begin{proof}Consider the map $G:D\toto K,\, G(y)=\{x\in K:F(x,y)\cap\Real_+\neq\emptyset\}.$ We show that $G(y)$ is closed for all $y\in D.$ Indeed, for a fixed $y\in D$ we have $G(y)=f_y^{-1}(\Real_+),$ where $f_y:K\toto\Real,\, f_y(x)=F(x,y).$ From (i) we have that $f_y$ is upper semicontinuous on $K$ and since $\Real_+$ is closed, according to Remark \ref{r21} $f_y^{-1}(\Real_+)$ is closed. Hence $G(y)\subseteq K$ is closed for all $y\in D$, and by the compactness of $K$ we get that $G(y)$ is compact for every $y\in D.$

Following the proof of Theorem \ref{t31}, it can be  shown  that $\bigcap_{y\in  D}G(y)\neq\emptyset,$ that is, there exists $x_0\in K$ such that $$F(x_0,y)\cap\Real_+\neq\emptyset,\,\forall y\in D.$$
Now, let us fix $y\in K\setminus D$ and assume that $F(x_0,y)\in(-\infty,0).$ Since the set-valued function $F(x_0, .)$ is upper semicontinuous at $y$ we obtain that there exists an open neighbourhood $U$ of $y$, such that for all $F(x_0,U)\subseteq (-\infty,0).$ But $D$ is dense in $K$, and hence there exists $u\in U$ such that $u\in D$, so $F(x_0,u)\cap\Real_+\neq \emptyset,$ a contradiction. Thus, $$F(x_0,y)\cap\Real_+,\,\forall y\in K$$ must hold true.
\end{proof}


The reminder of this section is concerned with the single-valued equilibrium
problem.

Let $K\subseteq X$ be a subset and let $f:K\longrightarrow{\mathbb{R}}.$ We
say that $f$ is convex on $K,$ respectively concave on $K$, if for all $x_1,x_2,\ldots x_n\in K$ and $\lambda_i\ge 0,\,
i\in\{1,2,\ldots,n\},\, \sum_{i=1}^n \lambda_i=1$ such that $\sum_{i=1}^n
\lambda_i x_i\in K$ one has
\begin{equation*}
\sum_{i=1}^n \lambda_i f(x_i)\ge f\left(\sum_{i=1}^n \lambda_i x_i\right),%
\mbox{ respectively }\sum_{i=1}^n \lambda_i f(x_i)\le f\left(\sum_{i=1}^n
\lambda_i x_i\right).
\end{equation*}

Note that in these definitions we do not assume the convexity of $K.$ We have the following existence result for the single valued equilibrium
problem.

\begin{thm}
\label{t52} Let $X$ be a Hausdorff locally convex topological vector space,
let $K$ be a nonempty convex compact subset of $X$, let $D\subseteq K$ be a
self segment-dense set and let $\varphi :K \times K \longrightarrow{\mathbb{R%
}}$ a function satisfying

\begin{itemize}
\item[(i)] $\forall y\in D$ the function $x\longrightarrow \varphi(x,y)$ is
upper semicontinuous on $K,$

\item[(ii)] $\forall x\in K, y\longrightarrow \varphi(x, y)$ is upper
semicontinuous on $K\setminus D$,

\item[(iii)] $\forall x\in D,$ the function $y\to\varphi(x,y)$ is convex on $%
D,$

\item[(iv)] $\forall x\in D, \varphi(x, x)\ge 0.$
\end{itemize}

Then, there exists an element $x_0\in K$ such that
\begin{equation*}
\varphi(x_0,y)\ge 0,\,\forall y\in K.
\end{equation*}
\end{thm}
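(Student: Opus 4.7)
The plan is to follow the proof of Theorem \ref{t31} almost verbatim, with the only substantive differences being that closedness of the KKM values will now come from the single-valued characterization of upper semicontinuity in Remark \ref{r22}(iii) instead of the net argument of Remark \ref{r21}(i), and that the scalar convexity inequality replaces the Minkowski-sum inclusion. Concretely, I would introduce $G:D\rightrightarrows K$, $G(y)=\{x\in K:\varphi(x,y)\ge 0\}$, and observe that hypothesis (i) together with Remark \ref{r22}(iii) make each $G(y)$ a closed, hence compact, subset of $K$.

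Next I would verify that $G$ is a KKM map, which is the heart of the argument. Given $y_1,\dots,y_n\in D$ and a convex combination $y_0=\sum_{i=1}^n\lambda_i y_i\in D$, convexity (iii) combined with (iv) gives
$$0\le\varphi(y_0,y_0)\le\sum_{i=1}^n\lambda_i\varphi(y_0,y_i),$$
so $\varphi(y_0,y_{i_0})\ge 0$ for some index $i_0$ and consequently $y_0\in G(y_{i_0})$. This establishes the relative inclusion $\co\{y_1,\dots,y_n\}\cap D\subseteq\bigcup_{i=1}^n G(y_i)$; to promote it to the full convex hull I would take closures and invoke Lemma \ref{l51} together with the closedness of each $G(y_i)$, obtaining the genuine KKM property. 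Ky Fan's lemma then delivers some $x_0\in K$ with $\varphi(x_0,y)\ge 0$ for every $y\in D$.

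Finally I would pass from $D$ to $K$ using assumption (ii): for $y\in K\setminus D$, if $\varphi(x_0,y)<0$ then upper semicontinuity of $\varphi(x_0,\cdot)$ at $y$ would yield a neighborhood of $y$ on which $\varphi(x_0,\cdot)<0$, and by density of $D$ in $K$ this neighborhood must meet $D$, contradicting the inequality already established. The main obstacle, exactly as in Theorem \ref{t31}, is the KKM step: one only has the inclusion of $\co\{y_1,\dots,y_n\}\cap D$ in $\bigcup_{i=1}^n G(y_i)$ for free, and Lemma \ref{l51} is precisely what bridges the gap to the full convex hull; this is also where mere denseness of $D$ (cf.\ Example \ref{ex1}) would not suffice.
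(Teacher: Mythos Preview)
Your proposal is correct and follows essentially the same route as the paper's proof, which is explicitly outlined as an adaptation of the argument for Theorem~\ref{t31}: define $G(y)=\{x\in K:\varphi(x,y)\ge 0\}$, use (i) and Remark~\ref{r22}(iii) for closedness, combine (iii), (iv) and Lemma~\ref{l51} for the KKM property, apply Ky Fan's lemma, and then extend from $D$ to $K$ via (ii). The only cosmetic difference is that the paper handles the final extension with the $\limsup$ characterization of upper semicontinuity (Remark~\ref{r22}(i)) rather than your equivalent neighborhood-plus-density contradiction.
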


\begin{proof} We give only an outline of the proof since the ideas are similar to those used in the proof of Theorem \ref{t31}.

We consider the map $$G:D\toto K,\,\, G(y)=\{x\in K:\varphi(x,y)\ge 0\}.$$ Observe that for a fixed $y\in D$ the set  $G(y)$ is the superlevel set $\{x\in K:f_y(x)\ge 0\}$ of the function $f_y:K\to\Real,\, f_y(x)=\varphi(x,y).$
Due to the assumption $(i)$, we have that $G(y)$ is closed for all $y\in D.$

Further, from assumptions $(iii)$, $(iv)$  and Lemma \ref{l51} we obtain that $G$ is a KKM application. Then, according to Ky Fan's lemma $\bigcap_{y\in  D}G(y)\neq\emptyset.$ So, there exists $x_0\in K$ such that $\varphi(x_0,y)\ge 0$ for all $y\in D.$

Finally, if $y\in K\setminus D$ by the denseness of $D$ in $K$ there exists a net $(y^\a)\subseteq D$ such that $\lim y^a=y.$ At this point, the assumption $(ii)$, $\varphi(x_0, y)$ the upper semicontinuity of $\varphi(x_0, y)$ on $K\setminus D$, assures that  $0\le\limsup_{y^\a\to y} \varphi(x_0,y^\a)\le \varphi(x_0,y).$ Thus we have $\varphi(x_0,y)\ge 0$ for all $y\in K.$

\end{proof}
The above result has also a complementary formulation in which convexity is
replaced by concavity and the inequalities have opposite direction.

\begin{thm}
\label{c51} Let $X$ be a Hausdorff locally convex topological vector space,
let $K$ be a nonempty convex compact subset of $X$, let $D\subseteq K$ be a
self segment-dense set and let $\varphi :K \times K \longrightarrow{\mathbb{R%
}}$ a function satisfying

\begin{itemize}
\item[(i)] $\forall y\in D$ the function $x\longrightarrow \varphi(x,y)$ is
lower semicontinuous on $K,$

\item[(ii)] $\forall x\in K, y\longrightarrow \varphi(x, y)$ is lower
semicontinuous on $K\setminus D$,

\item[(iii)] $\forall x\in D,$ the function $y\to\varphi(x,y)$ is concave on
$D,$

\item[(iv)] $\forall x\in D, \varphi(x, x)\le 0.$
\end{itemize}

Then, there exists an element $x_0\in K$ such that
\begin{equation*}
\varphi(x_0,y)\le 0,\,\forall y\in K.
\end{equation*}
\end{thm}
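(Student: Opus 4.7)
The cleanest route is to reduce this statement to Theorem \ref{t52} by a sign change. I set $\psi := -\varphi$ and check that $\psi$ satisfies the four hypotheses of Theorem \ref{t52} with the same $K$ and $D$. Since $\varphi(\cdot,y)$ is lower semicontinuous on $K$ for $y\in D$, its negative $\psi(\cdot,y)$ is upper semicontinuous on $K$, giving (i). For the same reason, (ii) of Theorem \ref{c51} translates into (ii) of Theorem \ref{t52}. Concavity of $y\mapsto\varphi(x,y)$ on $D$ becomes convexity of $y\mapsto\psi(x,y)$ on $D$, giving (iii). Finally $\varphi(x,x)\leq 0$ is equivalent to $\psi(x,x)\geq 0$, giving (iv). Theorem \ref{t52} then produces $x_0\in K$ with $\psi(x_0,y)\geq 0$ for all $y\in K$, which is precisely $\varphi(x_0,y)\leq 0$ for all $y\in K$.

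If one prefers a self-contained argument that mirrors Theorem \ref{t52} verbatim, the plan is as follows. Define the sublevel-type map
\[
G:D\toto K,\qquad G(y)=\{x\in K:\varphi(x,y)\leq 0\}.
\]
By hypothesis (i) and Remark \ref{r22}(iv), each $G(y)$ is a closed, hence compact, subset of the compact set $K$. Next, I would prove the KKM property on $D$: given $y_1,\dots,y_n\in D$ and $\lambda_i\geq 0$ with $\sum\lambda_i=1$ such that $y_0:=\sum\lambda_i y_i\in D$, suppose for contradiction that $\varphi(y_0,y_i)>0$ for every $i$. Concavity (iii) gives $\varphi(y_0,y_0)\geq\sum\lambda_i\varphi(y_0,y_i)>0$, contradicting (iv). Hence $y_0\in\bigcup_i G(y_i)$, so $\co\{y_1,\dots,y_n\}\cap D\subseteq\bigcup_i G(y_i)$.

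Now I invoke Lemma \ref{l51}: taking closures and using that each $G(y_i)$ is closed yields
\[
\co\{y_1,\dots,y_n\}=\cl(\co\{y_1,\dots,y_n\}\cap D)\subseteq\bigcup_{i=1}^n G(y_i),
\]
so $G$ is KKM. Ky Fan's lemma then produces $x_0\in\bigcap_{y\in D}G(y)$, i.e.\ $\varphi(x_0,y)\leq 0$ for every $y\in D$.

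The final step extends the inequality from $D$ to $K$. Given $y\in K\setminus D$, by denseness of $D$ in $K$ there is a net $(y^{\alpha})\subseteq D$ with $y^\alpha\to y$. Hypothesis (ii) and Remark \ref{r22}(ii) give $\varphi(x_0,y)\leq\liminf_{\alpha}\varphi(x_0,y^\alpha)\leq 0$, completing the argument. The only genuinely non-routine ingredient is the KKM inclusion on all of $\co\{y_1,\dots,y_n\}$, and that is precisely where the self segment-density of $D$ (through Lemma \ref{l51}) is indispensable; once that is in place, everything else is formal. Because of this, I would favor the one-line reduction to Theorem \ref{t52} via $\psi=-\varphi$.
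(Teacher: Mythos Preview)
Your proposal is correct and matches the paper's own proof exactly: the paper simply says ``Apply Theorem \ref{t52} to the function $-\varphi$,'' which is your preferred one-line sign-change reduction. Your optional self-contained argument is also sound and faithfully mirrors the proof of Theorem \ref{t52}, but it is not needed here.
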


\begin{proof} Apply Theorem \ref{t52} to the function $-\varphi.$
\end{proof}

In what follows we show that the assumption that $D$ is self segment-densene, in the hypotheses of the previous theorems is essential, and it cannot be replaced by the denseness of $D.$

Indeed, let us consider the Hilbert space of square-summable sequences $l_2$, and let $K=\{x\in l_2:\|x\|\le 1\}$ be its unit ball while $D=\{x\in l_2:\|x\|= 1\}$ is the unit sphere. It is well known that $l_2$ endowed with the weak topology is a Hausdorff locally convex topological vector space, and by Banach-Alaoglu theorem $K$ is compact in this topology. Further, we have seen in Example \ref{ex1} that $D$ is dense, but not self segment-dense in $K.$

In this setting we define the single-valued map
$$\varphi:K\times K\to\Real,\, \varphi(x,y)=\<x,y\>-1,$$
which has the following properties:

\begin{itemize}
\item[(a)] for all $y\in K, x\longrightarrow \varphi(x,y)$ is continuous on $K,$

\item[(b)] for all $x\in K, y\longrightarrow \varphi(x,y)$ is continuous on $K,$

\item[(c)] for all $x\in K, y\longrightarrow\varphi(x,y)$ is affine, hence convex and also concave on $K,$

\item[(d)] $\varphi(x,x)=0$ for all $x\in D.$

\end{itemize}

Now, consider the operators $F_1,F_2:K\times K\toto\Real,$

$$F_1(x,y)=[\varphi(x,y),\infty),$$
and
$$F_2(x,y)=(-\infty,\varphi(x,y)].$$

Obviously $F_1(x,x)=[0,\infty)$ for all $x\in D$, while $F_2(x,x)=(-\infty,0]$ for all $x\in D,$ and it can easily be shown that $F_1$, respectively $F_2$ satisfy the conditions (i)-(iii) in Theorem \ref{t31}, respectively Theorem \ref{t511} (even some stronger assumptions, since we can take everywhere $x,y\in K$).

S we see that $F_1,\,F_2$, respectively $\varphi$ satisfy all the assumptions of Theorem \ref{t31}, Theorem \ref{t511}, respectively Theorem \ref{t52} except the asspumtion that $D$ is self segment-dense (here $D$ is only dense) and also that the conclusions of the above mentioned theorems fail, since for $y=0\in K$ one has
$$F_1(x,y)=[-1,\infty),\,\forall x\in K,$$
$$F_2(x,y)=(-\infty,-1],\,\forall x\in K,$$
respectively
$$\varphi(x,y)=-1,\,\forall x\in K.$$


\section{Densely defined equilibrium problems without compactness assumptions}


The compactness of the domain $K$ in the hypotheses of the existence theorems in Section 3 is a rather strong condition, so a natural question is whether similar existence results can be obtained without a compactness assumption. In this context, one can observe that the KKM mappings built in the proofs of the above mentioned results are compact valued, while Ky Fan's lemma requires the existence of a single point where the KKM must be compact valued. Motivated by this observation, in what follows we replace the compactness assumption by the closedness of the domain $K$ in order to obtain existence results for the equilibrium problems presented so far.

\begin{thm}
\label{t36} Let $X$ be a Hausdorff locally convex topological vector space,
let $K$ be a nonempty convex closed subset of $X,$ let $D\subseteq K$ be a
self segment-dense set, and let $F :K \times K \rightrightarrows {\mathbb{R}}
$ be a set valued map satisfying

\begin{itemize}
\item[(i)] $\forall y\in D, x\longrightarrow F(x, y)$ is lower
semicontinuous on $K$,

\item[(ii)] $\forall x\in K, y\longrightarrow F(x, y)$ is lower
semicontinuous on $K\setminus D$,

\item[(iii)] $\forall x\in D, y\longrightarrow F(x, y)$ is convex on $D$,

\item[(iv)] $\forall x\in D, F(x,x)\ge 0.$

\item[(v)] $\exists K_0\subseteq X$ compact and $y_0\in D\cap K_0$ such that $F(x,y_0)\cap(-\infty,0)\neq\emptyset,\,\forall x\in K\setminus K_0.$
\end{itemize}

Then, there exists an element $x_0 \in K$ such that
\begin{equation*}
F(x_0, y)\ge 0, \,\forall y\in K.
\end{equation*}
\end{thm}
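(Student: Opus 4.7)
The plan is to follow the proof of Theorem \ref{t31} essentially verbatim, replacing the use of compactness of $K$ (which was what gave compactness of every $G(y)$) by the coercivity condition (v), which will supply exactly one compact $G(y_0)$ as is all that Ky Fan's lemma actually requires.

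Concretely, I would again define the KKM-candidate $G:D\toto K$ by $G(y)=\{x\in K:F(x,y)\ge 0\}$. The closedness of each $G(y)$ for $y\in D$ is proved exactly as in Theorem \ref{t31}: fix $y\in D$, take a net $(x^\a)\subseteq G(y)$ converging to some $x\in K$ (note $x\in K$ because $K$ is now only assumed closed, but that suffices), and use hypothesis (i) together with Remark \ref{r21} to obtain, for every $x^*\in F(x,y)$, a net $x^*_\a\in F(x^\a,y)$ with $x^*_\a\to x^*$; since each $x^*_\a\ge 0$ we conclude $x^*\ge 0$ and hence $x\in G(y)$. The KKM property of $G$ is then obtained word-for-word as in Theorem \ref{t31}, using assumptions (iii) and (iv) and, crucially, Lemma \ref{l51} to pass from $\co\{y_1,\dots,y_n\}\cap D\subseteq\bigcup_i G(y_i)$ to $\co\{y_1,\dots,y_n\}\subseteq\bigcup_i G(y_i)$ via closure.

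The one new ingredient, replacing the compactness of $K$, is to show that $G(y_0)$ is compact. This is where hypothesis (v) enters: if $x\in K\setminus K_0$, then by (v) there exists $u\in F(x,y_0)$ with $u<0$, so $F(x,y_0)\not\subseteq[0,\infty)$, i.e.\ $x\notin G(y_0)$. Hence $G(y_0)\subseteq K_0$, and since $G(y_0)$ is a closed subset of the compact set $K_0$, it is compact. With this in hand, Ky Fan's lemma applies and yields $x_0\in\bigcap_{y\in D}G(y)$, so $F(x_0,y)\ge 0$ for all $y\in D$.

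Finally, to extend the inequality to $y\in K\setminus D$, I would argue exactly as at the end of the proof of Theorem \ref{t31}: by denseness of $D$ in $K$ pick a net $(y^\a)\subseteq D$ with $y^\a\to y$, and use hypothesis (ii) together with Remark \ref{r21}(i) to produce, for every $y^*\in F(x_0,y)$, a net $y^*_\a\in F(x_0,y^\a)$ with $y^*_\a\to y^*$; since $y^*_\a\ge 0$ we get $y^*\ge 0$, whence $F(x_0,y)\ge 0$. The only conceptual obstacle is the compactness step for $G(y_0)$, but this is an immediate consequence of (v); the rest is a straightforward repetition of the compact-domain argument, made possible precisely because Ky Fan's lemma needs just one compact $G(y_0)$.
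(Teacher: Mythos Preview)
Your proposal is correct and follows essentially the same approach as the paper's own proof: define $G(y)=\{x\in K:F(x,y)\ge 0\}$, reuse the closedness and KKM arguments from Theorem~\ref{t31}, invoke (v) to get $G(y_0)\subseteq K_0$ and hence $G(y_0)$ compact, apply Ky Fan's lemma, and finish with (ii) on $K\setminus D$. The paper's version is just a terse sketch of exactly these steps (it even refers back to the earlier theorem's proof rather than repeating details), so your write-up is in fact more explicit than the original.
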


\begin{proof}
Consider the map $$G:D\toto K,\, G(y)=\{x\in K:F(x,y)\ge 0\}.$$ According to the proof of Theorem \ref{t52}, $G(y)$ is closed for all $y\in D.$ We show that $G(y_0)$ is compact and the rest of the proof is similar to the proof of Theorem \ref{t52}. It is thus enough to show that $G(y_0)\subseteq K_0.$
 Assume the contrary, that there exists $z\in G(y_0)$ such that $z\not\in K_0.$ Then $F(z,y_0)\ge 0$ which contradicts (v).
\end{proof}

The following results can be proved analogously.

\begin{thm}
\label{t37} Let $X$ be a Hausdorff locally convex topological vector space,
let $K$ be a nonempty convex closed subset of $X,$ let $D\subseteq K$ be a
self segment-dense set, and let $F :K \times K \rightrightarrows {\mathbb{R}}
$ be a set valued map satisfying

\begin{itemize}
\item[(i)] $\forall y\in D, x\longrightarrow F(x, y)$ is upper
semicontinuous on $K$,

\item[(ii)] $\forall x\in K, y\longrightarrow F(x, y)$ is upper
semicontinuous on $K\setminus D$,

\item[(iii)] $\forall x\in D, y\longrightarrow F(x, y)$ is concave on $D$,

\item[(iv)] $\forall x\in D, F(x,x)\cap {\mathbb{R}}_+\neq\emptyset.$

\item[(v)] $\exists K_0\subseteq X$ compact and $y_0\in D\cap K_0$ such that $F(x,y_0)\cap \mathbb{R}_+=\emptyset,\,\forall x\in K\setminus K_0.$
\end{itemize}

Then, there exists an element $x_0 \in K$ such that
\begin{equation*}
F(x_0, y)\cap {\mathbb{R}}_+\neq\emptyset, \,\forall y\in K.
\end{equation*}
\end{thm}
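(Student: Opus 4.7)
The plan is to follow the pattern of the proof of Theorem \ref{t36}, adapted to the second type of set-valued equilibrium problem as in Theorem \ref{t511}, with the compactness of the level set $G(y_{0})$ replacing the compactness of the whole domain $K$. Accordingly, I would define the KKM candidate
\begin{equation*}
G:D\toto K,\qquad G(y)=\{x\in K:F(x,y)\cap \mathbb{R}_{+}\neq \emptyset\},
\end{equation*}
and aim to show that $\bigcap_{y\in D}G(y)\neq\emptyset$ via Ky Fan's lemma, then upgrade the resulting point from $D$ to all of $K$ by a density argument.

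First, I would verify that $G(y)$ is closed for every $y\in D$. Since $\mathbb{R}_{+}$ is closed and $x\mapsto F(x,y)$ is upper semicontinuous on $K$ by (i), Remark \ref{r21}(iv) gives that $G(y)=F(\cdot,y)^{-}(\mathbb{R}_{+})$ is closed in $K$. Next, I would use assumption (v) to establish that $G(y_{0})\subseteq K_{0}$ (by contrapositive: if $x\notin K_{0}$, then $F(x,y_{0})\cap\mathbb{R}_{+}=\emptyset$, hence $x\notin G(y_{0})$). Since $K_{0}$ is compact and $G(y_{0})$ is closed in $K$, the set $G(y_{0})$ is compact, which is the single compactness ingredient required by Ky Fan's lemma.

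The crucial step is the KKM property. Fixing $y_{1},\dots,y_{n}\in D$, I would first show the restricted inclusion
\begin{equation*}
\co\{y_{1},\dots,y_{n}\}\cap D\subseteq \bigcup_{i=1}^{n}G(y_{i})
\end{equation*}
by contradiction: if $\bar y=\sum_{i=1}^n\l_i y_i\in D$ belonged to none of the $G(y_{i})$, then $F(\bar y,y_{i})\subseteq(-\infty,0)$ for every $i$, and by concavity (iii) together with (iv),
\begin{equation*}
\emptyset \neq F(\bar y,\bar y)\cap \mathbb{R}_{+}\subseteq \Big(\sum_{i=1}^{n}\l_{i}F(\bar y,y_{i})\Big)\cap \mathbb{R}_{+},
\end{equation*}
forcing some $u_{i}\in F(\bar y,y_{i})$ to be nonnegative (as a convex combination of strictly negative numbers cannot be nonnegative), a contradiction. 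Taking closures, appealing to Lemma \ref{l51} on the left-hand side and to the finite-union-of-closed-sets property on the right-hand side extends this to the genuine KKM inclusion $\co\{y_{1},\dots,y_{n}\}\subseteq \bigcup_{i}G(y_{i})$. Ky Fan's lemma then yields some $x_{0}\in \bigcap_{y\in D}G(y)$.

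Finally, for $y\in K\setminus D$ I would argue by contradiction exactly as in the proof of Theorem \ref{t511}: if $F(x_{0},y)\cap\mathbb{R}_{+}=\emptyset$, then $F(x_{0},y)\subseteq (-\infty,0)$, and by upper semicontinuity of $F(x_{0},\cdot)$ at $y$ (assumption (ii)) there is an open neighborhood $U$ of $y$ with $F(x_{0},u)\subseteq(-\infty,0)$ for all $u\in U$; denseness of $D$ in $K$ produces $u\in U\cap D$ with $F(x_{0},u)\cap\mathbb{R}_{+}\neq\emptyset$, a contradiction. The main technical obstacle is really only bookkeeping: making the KKM inclusion survive the passage from $D$ to $\co\{y_{1},\dots,y_{n}\}$, which is precisely where the self segment-dense hypothesis (through Lemma \ref{l51}) is indispensable; everything else is a straightforward transcription of the arguments already developed for Theorems \ref{t511} and \ref{t36}.
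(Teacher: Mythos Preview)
Your argument is correct and is precisely the ``analogous'' proof the paper has in mind: it merges the compactness step of Theorem \ref{t36} (using assumption (v) to trap $G(y_{0})$ inside the compact $K_{0}$) with the closedness and KKM verification carried out in the proof of Theorem \ref{t511}, and finishes with the same density/usc contradiction to pass from $D$ to $K\setminus D$. The paper itself does not spell out a proof for this theorem beyond the phrase ``can be proved analogously,'' and your write-up is exactly that analogous argument.
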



\begin{thm}
\label{t38} Let $X$ be a Hausdorff locally convex topological vector space,
let $K$ be a nonempty convex closed subset of $X$, let $D\subseteq K$ be a
self segment-dense set and let $\varphi :K \times K \longrightarrow{\mathbb{R%
}}$ a function satisfying

\begin{itemize}
\item[(i)] $\forall y\in D$ the function $x\longrightarrow \varphi(x,y)$ is
upper semicontinuous on $K,$

\item[(ii)] $\forall x\in K, y\longrightarrow \varphi(x, y)$ is upper
semicontinuous on $K\setminus D$,

\item[(iii)] $\forall x\in D,$ the function $y\to\varphi(x,y)$ is convex on $%
D,$

\item[(iv)] $\forall x\in D, \varphi(x, x)\ge 0.$

\item[(v)] $\exists K_0\subseteq X$ compact and $y_0\in D\cap K_0$ such that $\varphi(x,y_0)<0,\,\forall x\in K\setminus K_0.$
\end{itemize}

Then, there exists an element $x_0\in K$ such that
\begin{equation*}
\varphi(x_0,y)\ge 0,\,\forall y\in K.
\end{equation*}
\end{thm}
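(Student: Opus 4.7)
The plan is to mimic the proof of Theorem~\ref{t52} step by step, with the single modification that compactness of $K$ is used only at one point, namely to ensure that $G(y)$ is compact for some particular $y$, and this will be replaced by the coercivity assumption~(v).

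First I would introduce the map $G:D\toto K$ defined by
\[
G(y)=\{x\in K:\varphi(x,y)\ge 0\}.
\]
For each fixed $y\in D$, assumption~(i) together with Remark~\ref{r22}(iii) gives that the superlevel set $\{x\in K:\varphi(x,y)\ge 0\}$ is closed in $K$, and since $K$ itself is closed in $X$, the set $G(y)$ is closed in $X$. Next I would verify the KKM property. The argument is identical to the one in the proof of Theorem~\ref{t31}: assumptions~(iii) and~(iv) show that $\co\{y_1,\dots,y_n\}\cap D\subseteq\bigcup_{i=1}^n G(y_i)$ for every finite collection in $D$; taking closures, using that each $G(y_i)$ is closed, and applying Lemma~\ref{l51} to replace $\cl(\co\{y_1,\dots,y_n\}\cap D)$ by $\co\{y_1,\dots,y_n\}$, yields $\co\{y_1,\dots,y_n\}\subseteq\bigcup_{i=1}^n G(y_i)$.

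The genuinely new step is to produce a point $y_0\in D$ such that $G(y_0)$ is compact, which is where assumption~(v) enters. I claim $G(y_0)\subseteq K_0$: if there were some $z\in G(y_0)$ with $z\notin K_0$, then $\varphi(z,y_0)\ge 0$, contradicting the strict inequality $\varphi(x,y_0)<0$ postulated in~(v) for all $x\in K\setminus K_0$. Hence $G(y_0)$ is a closed subset of the compact set $K_0$, and thus compact. Ky Fan's lemma then applies and gives $x_0\in\bigcap_{y\in D}G(y)$, meaning $\varphi(x_0,y)\ge 0$ for every $y\in D$.

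Finally I would promote this inequality from $D$ to the whole of $K$. Given $y\in K\setminus D$, the denseness of $D$ in $K$ provides a net $(y^\alpha)\subseteq D$ with $y^\alpha\to y$. Since $\varphi(x_0,y^\alpha)\ge 0$ for all $\alpha$ and, by assumption~(ii), $y\mapsto\varphi(x_0,y)$ is upper semicontinuous at $y$, Remark~\ref{r22}(i) gives
\[
0\le \limsup_{\alpha}\varphi(x_0,y^\alpha)\le \varphi(x_0,y),
\]
so $\varphi(x_0,y)\ge 0$ for all $y\in K$. The main obstacle is really only the compactness step, and it is neatly handled by~(v); every other ingredient is already available from Lemma~\ref{l51} and the proofs of Theorems~\ref{t31} and~\ref{t52}.
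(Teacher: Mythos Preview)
Your proposal is correct and follows essentially the same approach as the paper: the paper states that Theorem~\ref{t38} is proved analogously to Theorem~\ref{t36}, whose proof consists precisely of taking the map $G(y)=\{x\in K:\varphi(x,y)\ge 0\}$, noting that it is closed-valued and KKM as in Theorem~\ref{t52}, and then using~(v) to obtain $G(y_0)\subseteq K_0$ so that $G(y_0)$ is compact. Your write-up is in fact more detailed than the paper's own sketch.
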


The condition (v) in Theorem \ref{t36}, Theorem \ref{t37} respectively Theorem \ref{t38} seem to be not so easy to verify, however, it is well known  that in a reflexive Banach space $X$,  the closed ball $\overline{B}_r=\{x\in X:\|x\|\le r\},r>0,$ is weakly compact. Therefore, if we endow the reflexive Banach space $X$ with the weak topology, condition (v) in the hypotheses of the previous theorems becomes :
\begin{itemize}
\item[(v')] $\exists r>0$ and $y_0\in D,\,\|y_0\|\le r$ such that for all $x\in K,\, \|x\|>r$ $F(x,y_0)\cap (-\infty,0)\neq\emptyset$ holds.
\item[(v'')] $\exists r>0$ and $y_0\in D,\,\|y_0\|\le r$ such that for all $x\in K,\, \|x\|>r$ $F(x,y_0)\cap \Real_+=\emptyset$ holds.
\item[(v''')] $\exists r>0$ and $y_0\in D,\,\|y_0\|\le r$ such that for all $x\in K,\, \|x\|>r$ $\varphi(x,y_0)<0$ holds.
\end{itemize}
Furthermore, in this setting condition (v) in the hypotheses of Theorem \ref{t36}, \ref{t37} and \ref{t38} can be weakened by assuming that  $\exists r>0$ such that for all $x\in K,\, \|x\|>r$ there exists $y_0\in K$ with $\|y_0\|<\|x\|$ and the appropriate condition
\begin{itemize}
\item[(i)] $F(x,y_0)\cap (-\infty,0)\neq\emptyset$,
\item[(ii)] $F(x,y_0)\cap \Real_+=\emptyset$,
\item[(iii)] $\varphi(x,y_0)<0$
\end{itemize}
holds.

More precisely we have the following result.

\begin{thm}
\label{t39} Let $X$ be a reflexive Banach space, let $K$ be a nonempty convex closed subset of $X,$ let $D\subseteq K$ be a
self segment-dense set in the weak topology of $X$, and let $F :K \times K \rightrightarrows {\mathbb{R}}
$ be a set valued map satisfying

\begin{itemize}
\item[(i)] $\forall y\in D, x\longrightarrow F(x, y)$ is weak lower
semicontinuous on $K$,

\item[(ii)] $\forall x\in K, y\longrightarrow F(x, y)$ is weak lower
semicontinuous on $K\setminus D$,

\item[(iii)] $\forall x\in K, y\longrightarrow F(x, y)$ is convex on $K$,

\item[(iv)] $\forall x\in K, F(x,x)\ge 0,\mbox{ and }F(x,x)\supseteq\{ 0\}.$

\item[(v)] $\exists r>0$ such that for all $x\in K,\, \|x\|>r$ there exists $y_0\in K$ with $\|y_0\|<\|x\|$  such that $F(x,y_0)\cap(-\infty,0]\neq\emptyset.$
\end{itemize}

Then, there exists an element $x_0 \in K$ such that
\begin{equation*}
F(x_0, y)\ge 0, \,\forall y\in K.
\end{equation*}
\end{thm}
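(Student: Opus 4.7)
The idea is to pass to a weakly compact slice of $K$, solve the problem there by Ky Fan's lemma, then use the set-valued convexity together with assumption (v) to bootstrap to all of $K$.

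Equip $X$ with its weak topology, which is Hausdorff and locally convex. Since $X$ is reflexive (Banach--Alaoglu) and $K$ is convex and norm-closed, hence weakly closed, the set $K_R:=K\cap\overline{B}_R$ is weakly compact for every $R>0$. Fix $R>r$ large enough that $D$ contains an element of norm $<R$, and define $G:D\cap K_R\toto K_R$ by $G(y)=\{x\in K_R:F(x,y)\ge 0\}$. Hypothesis (i) and Remark~\ref{r21} make each $G(y)$ weakly closed, hence weakly compact. The KKM property is verified as in the proof of Theorem~\ref{t31}: for $y=\sum\lambda_i y_i$ with $y_i\in D\cap K_R$, the point $y$ lies in $K_R$, and if $F(y,y_i)\cap(-\infty,0)\neq\emptyset$ for every $i$, the set-valued convexity (iii) assembles a strictly negative element of $F(y,y)$, contradicting (iv). Ky Fan's lemma then delivers $x_R\in K_R$ with $F(x_R,y)\ge 0$ for every $y\in D\cap K_R$.

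I then extend this inequality first to $K_R$ and then to $K$. For $y\in K_R\setminus D$, self segment-denseness yields (see obstacle below) a net $(y^\alpha)\subseteq D\cap K_R$ converging weakly to $y$, and assumption (ii) via Remark~\ref{r21} gives $F(x_R,y)\ge 0$. For $y\in K\setminus K_R$, I choose a base point $z\in K_R$ with $\|z\|<R$ and $0\in F(x_R,z)$: if $\|x_R\|<R$ take $z=x_R$, since (iv) supplies $0\in F(x_R,x_R)$; otherwise $\|x_R\|>r$ and (v) yields $y_0\in K$ with $\|y_0\|<\|x_R\|\le R$ and $F(x_R,y_0)\cap(-\infty,0]\neq\emptyset$, which combined with $F(x_R,y_0)\ge 0$ from the extension to $K_R$ forces $0\in F(x_R,y_0)$, so I set $z=y_0$. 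For sufficiently small $t>0$, the point $y_t:=(1-t)z+ty$ lies in $K_R$, and the set-valued convexity (iii) applied with $0\in F(x_R,z)$ produces $tu_y\in F(x_R,y_t)$ for every $u_y\in F(x_R,y)$; the extension to $K_R$ makes $F(x_R,y_t)\subseteq[0,\infty)$, forcing $u_y\ge 0$ and hence $F(x_R,y)\ge 0$.

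The hard step is the transfer from weak self segment-denseness of $D$ in $K$ to weak density of $D\cap K_R$ in $K_R$. Bare density of $D$ in $K$ does not propagate to the bounded slice, since a weakly convergent net in $D$ need not remain norm-bounded, and the segment-wise density of Definition~\ref{dd} is precisely what enables a controlled truncation of approximating nets into $\overline{B}_R$ along segments emanating from a small-norm base point of $D$, in the spirit of Lemma~\ref{l51}. Everything else---the KKM verification, the weak compactness of $G(y)$, and the twin convexity tricks for passing from $D\cap K_R$ to $K_R$ and then to $K$---is routine.
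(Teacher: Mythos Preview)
Your plan is exactly the paper's: restrict to the weakly compact slice $K_0=K\cap\overline{B}_{r_1}$ for some $r_1>r$, obtain $x_0\in K_0$ solving the equilibrium problem on $K_0$, locate a base point $z_0$ with $\|z_0\|<r_1$ and $0\in F(x_0,z_0)$ (taking $z_0=x_0$ if $\|x_0\|<r_1$, or the $y_0$ furnished by (v) otherwise), and then extend to all $y\in K$ via the set-valued convexity (iii) along the segment $[z_0,y]$. The only packaging difference is that the paper does not re-run the KKM argument on $K_0$; it simply writes ``According to Theorem~\ref{t31}, there exists $x_0\in K_0$ such that $F(x_0,y)\ge 0$ for all $y\in K_0$'' and moves on.

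The ``hard step'' you single out---that $D\cap K_R$ be self segment-dense (in particular weakly dense) in $K_R$---is precisely what is needed to make that invocation of Theorem~\ref{t31} legitimate, and the paper does not spell it out either. So you have not introduced a gap relative to the paper; you have only made explicit what the paper leaves to the reader. The segment condition is automatic since $K_R$ is convex; for density, your idea of fixing $d_0\in D$ with $\|d_0\|<R$ and truncating an approximating net $(d_\alpha)\subseteq D$ along the segments $[d_0,d_\alpha]$ (using that $t\mapsto\|(1-t)d_0+td_\alpha\|$ is convex, so the sub-$R$ set is an interval containing $0$, and that $[d_0,d_\alpha]\cap D$ is dense there) is the right mechanism, though it takes a little care to write out cleanly.
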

\begin{proof} Let $r>0$ such that (v) holds and let $r_1>r.$ Let $K_0=K\cap \overline{B}_{r_1}.$ Since $K$ is convex and closed it is also weakly closed, $\overline{B}_{r_1}$ is weakly compact, hence $K_0$ is convex and weakly compact. According to Theorem \ref{t31}, there exists $x_0\in K_0$ such that $F(x_0,y)\ge 0$ for all $y\in K_0.$

Next, we show that there exists $z_0\in K_0,\|z_0\|<r_1$ such that $F(x_0,z_0)\supseteq\{0\}.$ Indeed, if $\|x_0\|<r_1$ then let $z_0=x_0$ and the conclusion follows by (iv). If $\|x_0\|=r_1>r$ then by (v) we have that there exists $z_0\in K$ with $\|z_0\|<\|x_0\|$  such that $F(x_0,z_0)\cap(-\infty,0]\neq\emptyset.$ On the other hand, since $z_0\in K_0$ we have $F(x_0,z_0)\ge 0$, hence $\{0\}\subseteq F(x_0,z_0).$

Let $y\in K.$ Then there exists $\l\in[0,1]$ such that $\l z_0+(1-\l)y\in K_0.$ Therefore $F(x_0,\l z_0+(1-\l)y)\ge 0$ and by (iii) we obtain
$$\l F(x_0,z_0)+(1-\l)F(x_0,y)\subseteq F(x_0,\l z_0+(1-\l)y)\subseteq[0,\infty).$$ Since $\{0\}\subseteq F(x_0,z_0)$ we have $F(x_0,y)\subseteq [0,\infty).$
\end{proof}

Similar results can be obtained for the other two equilibrium problems studied in this paper. However, if one compares Theorem \ref{t39} with Theorem \ref{t31} or Theorem \ref{t36} one observes that conditions (iii) and (iv) have been considerable changed. This is due the fact that condition (v) in Theorem \ref{t39} with the assumptions (iii) and (iv) of Theorem \ref{t31} or Theorem \ref{t36} does not assure the existence of a solution when $K$ is  closed but not compact.

Our purpose is to overcome this situation by replacing (v) with a condition that assures the existence of a solution under the original assumptions (iii) and (iv). In fact, we show that if $\forall x\in K, y\longrightarrow F(x, y)$ is convex on $D,$ respectively $\forall x\in D, F(x,x)\ge 0,$ instead of (iii), respectively (iv) in the previous theorem, then we can replace (v) with:

$\exists r>0$ such that for all $x\in K,\, \|x\|\le r$ there exists $y_0\in D$ with $\|y_0\|<r$  such that $\{0\}\subseteq F(x,y_0).$

The following result holds.
\begin{thm}
\label{t391} Let $X$ be a reflexive Banach space, let $K$ be a nonempty convex closed subset of $X,$ let $D\subseteq K$ be a
self segment-dense set in the weak topology of $X$, and let $F :K \times K \rightrightarrows {\mathbb{R}}
$ be a set valued map satisfying

\begin{itemize}
\item[(i)] $\forall y\in D, x\longrightarrow F(x, y)$ is weak lower
semicontinuous on $K$,

\item[(ii)] $\forall x\in K, y\longrightarrow F(x, y)$ is weak lower
semicontinuous on $K\setminus D$,

\item[(iii)] $\forall x\in K, y\longrightarrow F(x, y)$ is convex on $D$,

\item[(iv)] $\forall x\in D, F(x,x)\ge 0.$

\item[(v)] $\exists r>0$ such that for all $x\in K,\, \|x\|\le r$ there exists $y_0\in D$ with $\|y_0\|<r$  such that $\{0\}\subseteq F(x,y_0).$
\end{itemize}

Then, there exists an element $x_0 \in K$ such that
\begin{equation*}
F(x_0, y)\ge 0, \,\forall y\in K.
\end{equation*}
\end{thm}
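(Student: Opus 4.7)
The plan is to parallel the argument of Theorem \ref{t39}, but with one substantive modification: because convexity (iii) now holds only on the self segment-dense subset $D$, we cannot invoke Theorem \ref{t31} directly on a weakly compact ball (the set $D\cap K_0$ need not be self segment-dense in $K_0$). Instead, the intermediate solution will be produced by applying Ky Fan's lemma directly on $K_0 := K\cap \overline{B}_r$, and then promoted to all of $K$ via a segment argument together with weak density.

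Set $K_0 := K\cap \overline{B}_r$ with $r$ as in (v). Since $X$ is reflexive and $K$ is norm closed and convex (hence weakly closed), $K_0$ is weakly compact and convex; applied to any $x\in K_0$, (v) furnishes a $y_0\in D\cap K_0$, so this intersection is nonempty. Define $G':D\cap K_0 \toto K_0$ by $G'(y)=\{x\in K_0: F(x,y)\ge 0\}$. Weak lower semicontinuity (i) together with Remark \ref{r21} makes $G(y):=\{x\in K: F(x,y)\ge 0\}$ weakly closed in $K$, hence each $G'(y)=G(y)\cap K_0$ is weakly compact. For the KKM property, the standard contradiction argument, using (iii) at the diagonal point $x=\sum\lambda_i y_i\in D$ and (iv), yields
$$\co\{y_1,\ldots,y_n\}\cap D \subseteq \bigcup_{i=1}^n G(y_i)$$
for any $y_1,\ldots,y_n\in D\cap K_0$; since $\co\{y_1,\ldots,y_n\}\subseteq K_0$ by convexity of $K_0$, intersecting with $K_0$ and then taking weak closures, Lemma \ref{l51} combined with weak closedness of the finite union $\bigcup G'(y_i)$ upgrades the inclusion to $\co\{y_1,\ldots,y_n\}\subseteq \bigcup G'(y_i)$. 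Ky Fan's lemma then delivers $x_0\in K_0$ satisfying $F(x_0,y)\ge 0$ for all $y\in D\cap K_0$.

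Now invoke (v) at $x_0$ to select $y_0\in D$ with $\|y_0\|<r$ and $\{0\}\subseteq F(x_0,y_0)$; since $y_0\in D\cap K_0$, we also have $F(x_0,y_0)\subseteq [0,\infty)$. For any $y\in D$ with $\|y\|>r$, the continuous scalar function $t\mapsto \|t y_0+(1-t)y\|$ satisfies $\|y_0\|<r<\|y\|$, so it is $<r$ on some nonempty interval $(t^*,1]$. The weak and norm topologies agree on the finite-dimensional segment $[y_0,y]$, so self segment-density of $D$ in $K$ yields some $t\in(t^*,1)$ for which $z:=ty_0+(1-t)y\in D$, and by construction $z\in K_0$ as well. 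Hence $F(x_0,z)\ge 0$, and convexity (iii) (with $x=x_0\in K$ and $y_0,y,z\in D$) gives
$$t F(x_0,y_0)+(1-t)F(x_0,y)\subseteq F(x_0,z)\subseteq [0,\infty).$$
Combined with $0\in F(x_0,y_0)$ and $1-t>0$, this forces $u\ge 0$ for every $u\in F(x_0,y)$. Finally, for $y\in K\setminus D$, weak density of $D$ in $K$ supplies a net $(y_\alpha)\subseteq D$ with $y_\alpha\to y$; (ii) together with Remark \ref{r21} provides, for each $v\in F(x_0,y)$, a net $v_\alpha\in F(x_0,y_\alpha)$ with $v_\alpha\to v$, and since $v_\alpha\ge 0$ we conclude $v\ge 0$.

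The principal obstacle is the segment step in the previous paragraph: one must simultaneously arrange for $z=ty_0+(1-t)y$ to lie in the ball $\overline{B}_r$ (so the intermediate conclusion at $x_0$ applies) and to lie in $D$ (so convexity on $D$ can legally be invoked). Both constraints are attainable precisely because (v) places $y_0$ strictly inside the ball while self segment-density in the weak topology implies norm density of $\{t\in[0,1]: ty_0+(1-t)y\in D\}$ in $[0,1]$ on this one-dimensional segment; this interplay between the coercivity hypothesis and the self segment-density structure is the technical heart of the argument.
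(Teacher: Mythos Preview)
Your proof is correct and follows essentially the same route as the paper: obtain a solution on $K_0=K\cap\overline B_r$ via a KKM/Ky Fan argument, use (v) to pick an anchor point $y_0\in D$ strictly inside the ball, push the conclusion out to all of $D$ by a segment argument exploiting self segment-density and convexity (iii), and finally extend to $K\setminus D$ via (ii). Your careful unpacking of the first step---running the KKM argument directly on $G':D\cap K_0\rightrightarrows K_0$ rather than invoking Theorem~\ref{t31} as a black box---is in fact a clarification: the paper cites Theorem~\ref{t31} on $K_0$, but that theorem formally requires a self segment-dense subset of $K_0$, and weak density of $D\cap K_0$ in $K_0$ is not automatic. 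Your observation that Lemma~\ref{l51} still applies (since $y_1,\dots,y_n\in D$ and $\co\{y_i\}\subseteq K_0$) recovers exactly what is needed, and the remainder of the paper's proof uses only the conclusion on $D\cap K_0$ anyway, so the two arguments coincide from that point on.
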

\begin{proof} Let $r>0$ such that (v) holds and consider the weakly compact set $K_0=K\cap \overline{B}_r.$
According to Theorem \ref{t31} there exists $x_0\in K_0$ such that $F(x_0, y)\ge 0, \,\forall y\in K_0.$

According to (v), there exists $z_0\in D$ with $\|z_0\|<r$  such that $\{0\}\subseteq F(x,z_0).$

Now let $z\in D\setminus K_0.$ Then, in virtue of self segment denseness of $D$ in $K$, there exists $\l\in(0,1)$ such that $\l z_0+(1-\l)z\in K_0\cap D.$ According to (iii)
$$F(x_0,\l z_0+(1-\l)z)\supseteq  \l F(x_0,z_0)+(1-\l)F(x_0,z),$$
but $F(x_0,\l z_0+(1-\l)z)\ge 0$ and $\{0\}\subseteq F(x,z_0)$ which leads to $F(x_0,z)\ge 0.$

Hence $F(x_0,z)\ge 0$ for all $z\in D.$ Let $y\in K\setminus D$.  Since $D$ is dense in $K$ there exists a net $y^\a\subseteq D$ such that $\lim y^\a=y$ where the limit is taken in the weak topology of $X.$ According to (ii) $F(x_0,\cdot)$ is weakly lower semicontinuous at $y$. Now, due to Remark \ref{r21}, for every $y^*\in F(x_0,y)$ there exists a net $y^*_\a\in F(x_0,y^\a)$ such that $\lim y^*_\a=y^*.$ But obviously $y^*_\a\ge 0$, hence $y^*\ge 0,$ and finally $F(x_0,y)\ge 0,\,\forall y\in K.$
\end{proof}

Concerning the weaker set-valued equilibrium problem a similar result holds.
\begin{thm}\label{t310} Let $X$ be a reflexive Banach space, let $K$ be a nonempty convex closed subset of $X,$ let $D\subseteq K$ be a
self segment-dense set in the weak topology of $X$, and let $F :K \times K \rightrightarrows {\mathbb{R}}
$ be a set valued map satisfying
\begin{itemize}
\item[(i)] $\forall y\in D, x\longrightarrow F(x, y)$ is weak upper
semicontinuous on $K$,

\item[(ii)] $\forall x\in K, y\longrightarrow F(x, y)$ is weak upper
semicontinuous on $K\setminus D$,

\item[(iii)] $\forall x\in K, y\longrightarrow F(x, y)$ is concave on $D$,

\item[(iv)] $\forall x\in D, F(x,x)\cap\Real_+\neq\emptyset.$

\item[(v)] $\exists r>0$ such that for all $x\in K,\, \|x\|\le r$ there exists $y_0\in D$ with $\|y_0\|< r$  and $F(x,y_0)\le 0.$
\end{itemize}

Then, there exists an element $x_0 \in K$ such that
\begin{equation*}
F(x_0, y)\cap {\mathbb{R}}_+\neq\emptyset, \,\forall y\in K.
\end{equation*}
\end{thm}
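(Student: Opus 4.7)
The plan is to closely mirror the argument used for Theorem \ref{t391}, adapting the algebra for the ``intersection with $\mathbb{R}_+$'' form of the equilibrium problem. First, I would pick $r > 0$ from hypothesis (v) and set $K_0 := K \cap \overline{B}_r$. Since $X$ is reflexive and $K$ is convex and closed (hence weakly closed), $K_0$ is convex and weakly compact. Applying Theorem \ref{t511} on $K_0$ (with the trace of $D$ on $K_0$ as the self segment-dense subset, exactly as in the proof of Theorem \ref{t391}) yields $x_0 \in K_0$ satisfying $F(x_0, y) \cap \mathbb{R}_+ \neq \emptyset$ for every $y \in K_0$.

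Next, I would use (v) to select $z_0 \in D$ with $\|z_0\| < r$ and $F(x_0, z_0) \subseteq (-\infty, 0]$. Since $z_0 \in K_0$, we also have $F(x_0, z_0) \cap \mathbb{R}_+ \neq \emptyset$, forcing $0 \in F(x_0, z_0)$. Now, for each $z \in D \setminus K_0$, I would exploit the self segment-denseness of $D$ together with the strict inequality $\|z_0\| < r$: since $[z_0, z] \cap D$ is dense in $[z_0, z]$ and norm-continuity along the segment places points sufficiently close to $z_0$ inside $\overline{B}_r$, one can choose $\lambda \in (0,1)$ with $w := \lambda z_0 + (1-\lambda) z \in K_0 \cap D$. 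The concavity assumption (iii) then gives $F(x_0, w) \subseteq \lambda F(x_0, z_0) + (1-\lambda) F(x_0, z)$. Picking $u \in F(x_0, w) \cap \mathbb{R}_+$, write $u = \lambda a + (1-\lambda) b$ with $a \in F(x_0, z_0) \subseteq (-\infty, 0]$ and $b \in F(x_0, z)$; from $u \ge 0$ and $\lambda a \le 0$ one deduces $(1-\lambda) b \ge 0$, hence $b \ge 0$, so $F(x_0, z) \cap \mathbb{R}_+ \neq \emptyset$.

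Finally, for $y \in K \setminus D$, I would argue by contradiction as in the proof of Theorem \ref{t511}: if $F(x_0, y) \cap \mathbb{R}_+ = \emptyset$, then $F(x_0, y) \subseteq (-\infty, 0)$, an open set, so by the weak upper semicontinuity provided by (ii) there is a weak neighborhood $V$ of $y$ with $F(x_0, v) \subseteq (-\infty, 0)$ for every $v \in V \cap K$. Since $D$ is weakly dense in $K$, there exists a point of $D$ in $V$, contradicting the previous step, and we conclude that $F(x_0, y) \cap \mathbb{R}_+ \neq \emptyset$ for every $y \in K$.

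The main obstacle I anticipate is the joint control required in producing $w \in K_0 \cap D$: one must simultaneously respect the norm bound $\|w\| \le r$ (so that the conclusion on $K_0$ can be applied at $w$) and the membership $w \in D$ (so that concavity in $y$ on $D$ can be used through the identity on the segment). The strict inequality $\|z_0\| < r$ in (v) leaves room to shrink the weight on $z$ until the segment point lies inside the ball, and the self segment-density of $D$---rather than mere density---then supplies such a $\lambda$ with $w \in D$ on the segment $[z_0, z]$.
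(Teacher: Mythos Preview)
Your proposal is correct and follows essentially the same route as the paper's proof: apply Theorem \ref{t511} on $K_0=K\cap\overline{B}_r$, use (v) to obtain $z_0\in D$ with $\|z_0\|<r$ and $F(x_0,z_0)\le 0$, exploit self segment-density to place a convex combination $w\in K_0\cap D$ and deduce $F(x_0,z)\cap\mathbb{R}_+\neq\emptyset$ from concavity, and finish via upper semicontinuity and density for $y\in K\setminus D$. If anything, you are more explicit than the paper---you spell out the algebra $u=\lambda a+(1-\lambda)b\ge 0$, $a\le 0\Rightarrow b\ge 0$, and you articulate precisely why the strict inequality $\|z_0\|<r$ together with self segment-density yields a point of $D$ on $[z_0,z]$ inside $\overline{B}_r$, details the paper leaves implicit.
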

\begin{proof} Let $r>0$ such that (v) holds and consider the weakly compact set $K_0=K\cap \overline{B}_r.$
According to Theorem \ref{t511} there exists $x_0\in K_0$ such that $F(x_0, y)\cap {\mathbb{R}}_+\neq\emptyset, \,\forall y\in K_0.$

According to (v), there exists $z_0\in D$ with $\|z_0\|<r$  such that $F(x,z_0)\le 0.$

Now let $z\in D\setminus K_0.$ Then, in virtue of self segment denseness of $D$ in $K$, there exists $\l\in(0,1)$ such that $\l z_0+(1-\l)z\in K_0\cap D.$ According to (iii)
$$F(x_0,\l z_0+(1-\l)z)\subseteq  \l F(x_0,z_0)+(1-\l)F(x_0,z),$$
which leads to $F(x_0,z)\cap {\mathbb{R}}_+\neq\emptyset.$

Hence $F(x_0,z)\cap {\mathbb{R}}_+\neq\emptyset$ for all $z\in D.$ Let $y\in K\setminus D$. According to (ii) $F(x_0,\cdot)$ is weakly upper semicontinuous at $y$, hence, for any open set $V\subseteq\Real$ there exists an open neighborhood $U$ of $y$ such that for any $u\in U$ one has $F(x_0,u)\subseteq V.$ Since $D$ is dense in $K$ we have $D\cap U\neq\emptyset.$ Assume that $F(x_0,y)\subseteq (-\infty,0).$ Then take $V=(-\infty,0)$ and let $z\in U\cap D.$ Then $F(x_0,z)\subseteq (-\infty,0)$ which contradicts the fact that $F(x_0,z)\cap {\mathbb{R}}_+\neq\emptyset.$
\end{proof}

For sake of completeness we also give sufficient conditions for the solution existence of densely defined single valued equilibrium problem in a reflexive Banach space setting.

\begin{thm}
\label{t44}  Let $X$ be a reflexive Banach space, let $K$ be a nonempty convex closed subset of $X,$ let $D\subseteq K$ be a
self segment-dense set in the weak topology of $X$, and let $\varphi :K \times K \longrightarrow{\mathbb{R%
}}$ a function satisfying

\begin{itemize}
\item[(i)] $\forall y\in D$ the function $x\longrightarrow \varphi(x,y)$ is
weak upper semicontinuous on $K,$

\item[(ii)] $\forall x\in K, y\longrightarrow \varphi(x, y)$ is weak upper
semicontinuous on $K\setminus D$,

\item[(iii)] $\forall x\in K,$ the function $y\to\varphi(x,y)$ is convex on $D,$

\item[(iv)] $\forall x\in D, \varphi(x, x)\ge 0.$

\item[(v)] $\exists r>0$ such that for all $x\in K,\, \|x\|\le r$ there exists $y_0\in D$ with $\|y_0\|< r$  and $\varphi(x,y_0)= 0.$

\end{itemize}

Then, there exists an element $x_0\in K$ such that
\begin{equation*}
\varphi(x_0,y)\ge 0,\,\forall y\in K.
\end{equation*}
\end{thm}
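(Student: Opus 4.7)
The plan is to reduce to the weakly compact setting via condition (v) and then extend the resulting inequality from $K_0$ first to all of $D$ using self segment-denseness, and then to $K$ using the density of $D$ together with assumption (ii). This mirrors the strategy of Theorem \ref{t391}, with convexity and inequalities adapted to the single-valued case.

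First, let $r>0$ be as in (v) and set $K_0 := K\cap \overline{B}_r$. Since $K$ is norm-closed and convex it is weakly closed, and since $X$ is reflexive, $\overline{B}_r$ is weakly compact; thus $K_0$ is convex and weakly compact. Taking $D\cap K_0$ as the self segment-dense subset, I apply Theorem \ref{t52} on $K_0$ to obtain some $x_0\in K_0$ with $\varphi(x_0,y)\ge 0$ for all $y\in K_0$. Because $\|x_0\|\le r$, assumption (v) furnishes $z_0\in D$ with $\|z_0\|<r$ and $\varphi(x_0,z_0)=0$; note that $z_0\in K_0\cap D$.

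Next, I extend this inequality to all of $D$. Let $z\in D\setminus K_0$, so that $\|z\|>r$. The map $\lambda\mapsto\|\lambda z_0+(1-\lambda)z\|$ is convex, takes a value strictly below $r$ at $\lambda=1$ and strictly above $r$ at $\lambda=0$, so there is some $\lambda^*\in(0,1)$ such that every $\lambda\in(\lambda^*,1)$ produces a point of norm strictly less than $r$. Self segment-denseness of $D$ in $K$ applied to the pair $z_0,z\in D$ makes $[z_0,z]\cap D$ dense in $[z_0,z]$, so I may pick $\lambda\in(\lambda^*,1)$ with $w:=\lambda z_0+(1-\lambda)z\in K_0\cap D$. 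Convexity of $y\mapsto\varphi(x_0,y)$ on $D$ (assumption (iii)) together with $\varphi(x_0,z_0)=0$ yields
$$0\le \varphi(x_0,w)\le \lambda\,\varphi(x_0,z_0)+(1-\lambda)\,\varphi(x_0,z)=(1-\lambda)\,\varphi(x_0,z),$$
so $\varphi(x_0,z)\ge 0$. Combined with the first step, $\varphi(x_0,z)\ge 0$ for every $z\in D$.

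Finally, for $y\in K\setminus D$ the density of $D$ in $K$ produces a net $(y^\alpha)\subseteq D$ converging weakly to $y$. Assumption (ii) and Remark \ref{r22} then give $\varphi(x_0,y)\ge \limsup_\alpha \varphi(x_0,y^\alpha)\ge 0$, completing the proof. The delicate point, which I view as the main obstacle, is invoking Theorem \ref{t52} on $K_0$ with $D\cap K_0$ as the self segment-dense subset; this is the same implicit step that appears in the proof of Theorem \ref{t391}, and once accepted the remainder is a straightforward adaptation.
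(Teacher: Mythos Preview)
Your proof is essentially identical to the paper's: reduce to the weakly compact set $K_0=K\cap\overline{B}_r$, apply Theorem~\ref{t52} there, use (v) to get $z_0\in D\cap K_0$ with $\varphi(x_0,z_0)=0$, extend to $D$ via a convex combination lying in $K_0\cap D$ and the convexity inequality, and finish on $K\setminus D$ by density and (ii). Your write-up is in fact more careful than the paper's in justifying the existence of a suitable $\lambda\in(0,1)$ with $\lambda z_0+(1-\lambda)z\in K_0\cap D$, and you correctly flag the implicit use of ``$D\cap K_0$ self segment-dense in $K_0$'' when invoking Theorem~\ref{t52}; the paper makes exactly the same tacit assumption here (and in Theorem~\ref{t391}).
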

\begin{proof} Let $r>0$ such that (v) holds and consider the weakly compact set $K_0=K\cap \overline{B}_r.$
According to Theorem \ref{t52} there exists $x_0\in K_0$ such that $\varphi(x_0,y)\ge 0,\,\forall y\in K_0.$ According to (v), there exists $z_0\in D\cap K_0,\,\|z_0\|<r$ such that $\varphi(x_0,z_0)=0.$ Consider $z\in D\setminus K_0$ Since $D$ is self segment dense in $K$, there exists $\l\in(0,1)$ such that $\l z_0+(1-\l)z\in D\cap K_0.$
Hence, from (iii) we have $\varphi(x_0,\l z_0+(1-\l)z)\le \l\varphi(x_0,z_0)+(1-\l)\varphi(x_0,z),$ or equivalently
$(1-\l)\varphi(x_0,z)\ge \varphi(x_0,\l z_0+(1-\l)z)\ge 0.$ The latter relation shows, that $\varphi(x_0,z)\ge 0$ for all $z\in D.$

Finally, if $y\in K\setminus D$ by the denseness of $D$ in $K$ there exists a net $(y^\a)\subseteq D$ such that $\lim y^a=y$ where the limit is taken in the weak topology of $X.$  At this point, the assumption (ii), $\varphi(x_0, y)\ge 0$ for all $y\in D$ and  the upper semicontinuity of $\varphi(x_0, y)$ on $K\setminus D$, assures that  $$0\le\limsup_{y^\a\to y} \varphi(x_0,y^\a)\le \varphi(x_0,y).$$ Thus we have $\varphi(x_0,y)\ge 0$ for all $y\in K.$

\end{proof}


\section{A generalized Debreu-Gale-Nika\"{\i}do theorem}


As an application of the set-valued equilibrium results in the previous sections we
present a Debreu-Gale-Nika\"{\i}do-type theorem, which extends the famous, classical result in economic equilibrium theory by requiring that the collective Walras law holds not on the entire price simplex, but on a self segment-dense subset of it.  For the original results we refer to \cite{Debreu} Section 5.6(1), the Principal Lemma in \cite{Gale} and to Theorem 16.6 in \cite{Nikaido}.

Consider the simplex
\[
M^{n}=\left\{ x\in \mathbb{R}_{+}^{n}:\sum_{i=1}^{n}x_{i}=1\right\}
\]%
and the set valued map $C:M^{n}\rightrightarrows \mathbb{R}^{n}.$ Assume that $G(C)$, the  graph of $C$, is closed and $C$ has nonempty, bounded and convex values. According to Debreu-Gale-Nika\"{\i}do theorem, if for all $(x,y)\in G(C)$ we have $\<y,x\>\ge 0$ (Walras law), then there exists $x_0\in M^n$ such that
$$C(x_0)\cap \mathbb{R}_{+}^{n}\neq\emptyset.$$

In what follows we extend this result by weakening the conditions imposed on $C$ and assuming that Walras' law holds only on $D$, a self segment-dense subset of $M^{n}$.
Hence, we consider the set-valued map with nonempty compact convex values $C:M^{n}%
\rightrightarrows \mathbb{R}^{n}$. We will use the following notation%
\[
\forall y\in \mathbb{R}^{n},\quad \sigma \left( C\left( x\right) ,y\right)
:=\sup_{z\in C\left( x\right) }\left\langle z,y\right\rangle
\]%
and we say that $C$ is \emph{upper hemi-continuous} if the map $x\rightarrow
\sigma \left( C\left( x\right) ,y\right) $ is upper semi-continuous for all $%
y\in \mathbb{R}^{n}$. We have the following result.

\begin{thm}
Let $C:M^{n}\rightrightarrows \mathbb{R}^{n}$ be a set-valued map with
nonempty compact convex values and $D$ a self segment-dense subset of $M^{n}$. If

\begin{description}
\item[(i)] $C$ is upper hemi-continuous regarding $D$, i.e. $\forall y\in D,\quad$ $x\longrightarrow \sigma \left( C\left( x\right),y\right) $ is upper semi-continuous on $M^{n}.$

\item[(ii)] $\forall x\in D,\quad \sigma \left( C\left( x\right) ,x\right)
\geq 0.$
\end{description}

Then there exists $x_{0}\in M^{n}$ such that $C\left( x_{0}\right) \cap
\mathbb{R}_{+}^{n}\neq \emptyset $.
\end{thm}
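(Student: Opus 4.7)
The plan is to reduce the statement to the single-valued equilibrium existence result Theorem~\ref{t52}, applied on the compact convex simplex $M^{n}\subseteq\mathbb{R}^{n}$ (viewed as a subset of the Hausdorff locally convex space $\mathbb{R}^{n}$) with the given self segment-dense set $D$, taking as bifunction the scalarized map
\[
\varphi:M^{n}\times M^{n}\to\mathbb{R},\qquad \varphi(x,y):=\sigma\bigl(C(x),y\bigr)=\sup_{z\in C(x)}\langle z,y\rangle.
\]
The verification of the four hypotheses of Theorem~\ref{t52} is the main body of the proof. Condition (i), upper semicontinuity of $x\mapsto\varphi(x,y)$ on $M^{n}$ for each $y\in D$, is by definition the upper hemi-continuity of $C$ regarding $D$, i.e.\ hypothesis (i) of the theorem. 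For condition (ii), one observes that for every fixed $x\in M^{n}$ the set $C(x)$ is nonempty and compact, so $\sigma(C(x),\cdot)$ is a finite sublinear functional on $\mathbb{R}^{n}$, hence Lipschitz, hence continuous on all of $M^{n}$; in particular upper semicontinuous on $M^{n}\setminus D$. Condition (iii), convexity of $y\mapsto\varphi(x,y)$ on $D$ for $x\in D$, is again immediate from the sublinearity of the support function. Finally, condition (iv), $\varphi(x,x)=\sigma(C(x),x)\ge 0$ for every $x\in D$, is exactly hypothesis (ii) of our theorem.

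Applying Theorem~\ref{t52} then produces an element $x_{0}\in M^{n}$ such that $\sigma(C(x_{0}),y)\ge 0$ for all $y\in M^{n}$. Note that it is essential here that the conclusion of Theorem~\ref{t52} gives the inequality on the whole of $M^{n}$, not only on $D$; this extension from $D$ to $M^{n}$ is precisely where the self segment-denseness hypothesis on $D$ is used, and is exactly the feature that upgrades Theorem~\ref{t52} over the classical Ky Fan inequality.

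The remaining and only genuinely non-mechanical step is to pass from the support-function inequality to the claim $C(x_{0})\cap\mathbb{R}_{+}^{n}\neq\emptyset$. I would argue by contradiction: if $C(x_{0})\cap\mathbb{R}_{+}^{n}=\emptyset$, then strict separation of the nonempty compact convex set $C(x_{0})$ from the nonempty closed convex cone $\mathbb{R}_{+}^{n}$ provides some $p\in\mathbb{R}^{n}$ and $\alpha\in\mathbb{R}$ with
\[
\sup_{z\in C(x_{0})}\langle z,p\rangle<\alpha<\inf_{y\in\mathbb{R}_{+}^{n}}\langle y,p\rangle.
\]
Since $\mathbb{R}_{+}^{n}$ is a cone, the right-hand infimum equals $0$ if $p\in\mathbb{R}_{+}^{n}$ and is $-\infty$ otherwise; finiteness of the left-hand side forces $p\in\mathbb{R}_{+}^{n}\setminus\{0\}$ and $\alpha<0$. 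Normalizing $p$ to $p':=p/\sum_{i}p_{i}\in M^{n}$ yields $\sigma(C(x_{0}),p')<0$, contradicting $\sigma(C(x_{0}),y)\ge 0$ for all $y\in M^{n}$. I expect this separation step to be the only place demanding any care, as the rest reduces to unpacking definitions and invoking Theorem~\ref{t52}.
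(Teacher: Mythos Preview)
Your proof is correct and follows essentially the same route as the paper: reduce to one of the paper's equilibrium existence theorems applied to the support-function bifunction $\sigma(C(x),y)$, and then use a convex separation argument to pass from $\sigma(C(x_0),y)\ge 0$ on $M^n$ to $C(x_0)\cap\mathbb{R}_+^n\neq\emptyset$. The only cosmetic difference is that the paper wraps $\sigma(C(x),y)$ into the set-valued map $F(x,y)=(-\infty,\sigma(C(x),y)]$ and invokes Theorem~\ref{t511}, whereas you apply the single-valued Theorem~\ref{t52} directly; the final step is likewise phrased a bit differently (the paper argues via $\sigma(C(x_0)-\mathbb{R}_+^n,\cdot)\ge 0$ on all of $\mathbb{R}^n$), but the content is the same.
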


\begin{proof}
We consider the set-valued map $F:M^{n}\times M^{n}\rightrightarrows \mathbb{%
R}$%
\[
F\left( x,y\right) =\left( -\infty ,\sigma \left( C\left( x\right) ,y\right)\right]
\]%
and prove that it satisfies the hypotheses of Theorem \ref{t511}.

In view of Remark \ref{r21} $\forall y\in D,\quad x\longrightarrow F\left(
x,y\right) $ is upper semi-continuous on $M^{n}$ if for any sequence $\left(
x_{n}\right) \in M^{n},x_{n}\longrightarrow x$ and any $b\in R$ such that $%
\sigma \left( C\left( x\right) ,y\right) <b$ we can show that $\sigma \left(
C\left( x_{n}\right) ,y\right) <b$ for $n$ sufficiently large. But this
holds true since the upper hemi-continuity of $C$ together with Remark \ref%
{r22} guarantees that%
\[
\limsup_{x_{n}\longrightarrow x}\sigma \left( C\left( x_{n}\right)
,y\right) \leq \sigma \left( C\left( x\right) ,y\right) .
\]

To see that%
\[
\forall x\in M^{n},\quad y\longrightarrow F\left( x,y\right) \text{ is upper
semi-continuous on }M^{n}\backslash D,
\]%
we rely again on Remark \ref{r21} and also on the fact that $C\left(
x\right) $ is compact and convex and $\sigma \left(
C\left( x\right) ,y\right)$, the support function of  $C(x),$ is continuous. Therefore%
\[
\sigma \left( C\left( x\right) ,y_{n}\right) \longrightarrow \sigma \left(
C\left( x\right) ,y\right) .
\]

For any $x\in D$, the concavity of the set-valued map $y\longrightarrow
F\left( x,y\right) $\ follows from the convexity of single-valued map $%
y\longrightarrow \sigma \left( C\left( x\right) ,y\right) $, which is the pointwise supremum of a family of affine functions.

Finally, the condition that%
\[
\forall x\in D,\quad F\left( x,x\right) \cap \mathbb{R}_{+}\neq \emptyset
\]%
is exactly Walras' law in our hypothesis (ii). So, based on Theorem \ref%
{t511} we conclude that there exists $x_{0}\in M^{n}$ such that%
\[
F\left( x_{0},y\right) \cap \mathbb{R}_{+}^{n}\neq \emptyset \quad \forall
y\in M^{n},
\]%
or in other words%
\begin{equation}
\sigma \left( C\left( x_{0}\right) ,y\right) \geq 0\quad \forall y\in M^{n}.
\label{s>0}
\end{equation}%
But the above inequality is equivalent to%
\begin{equation}
\sigma \left( C\left( x_{0}\right) -\mathbb{R}_{+}^{n},y\right) \geq 0\quad
\forall y\in \mathbb{R}^{n}  \label{sigma}
\end{equation}%
since%
\[
\sigma \left( -\mathbb{R}_{+}^{n},y\right) =\left\{
\begin{array}{cl}
0 & \text{if }y\in \mathbb{R}_{+}^{n} \\
+\infty  & \text{if }y\notin \mathbb{R}_{+}^{n}.%
\end{array}%
\right.
\]%
At this point, we need the fact that $C\left( x_{0}\right) -\mathbb{R}%
_{+}^{n}$ is closed and convex  to conclude from (\ref%
{sigma}) that $0\in C(x_0)-\Real^n_+$ that is
\[
C\left( x_{0}\right) \cap \mathbb{R}_{+}^{n}\neq \emptyset .
\]
\end{proof}

\section{Non-cooperative equilibrium in $n-$person games}

Following the approach of Aubin, we consider a $n$-person game in normal
(strategic) form, (see \cite{Aubin}) and we denote by $E^{i}$ the strategy
set of each player $i$, $i\in \left\{ 1,\ldots ,n\right\} $, while $%
E=\prod\nolimits_{i=1}^{n}E^{i}$ is the set of multistrategies $x=\left(
x^{1},\ldots ,x^{n}\right) $.

In the absence of cooperation, from the perspective of player $i$, the set
of multistrategies can be regarded as a product between the set $E^{i}$ of
strategies that he controls, and the set of strategies $\widehat{x}%
^{i}=\left( x^{1},\ldots ,x^{i-1},x^{i+1},\ldots ,x^{n}\right) $ of all
other players%
\begin{equation*}
E=E^{i}\times \widehat{E}^{i}.
\end{equation*}

The behavior of each player is defined by a loss function $%
f^{i}:E\rightarrow \mathbb{R}$ with associated decision rules%
\begin{equation*}
C^{i}:\widehat{E}^{i}\rightarrow E^{i},C^{i}\left( \widehat{x}^{i}\right)
=\left\{ x^{i}\in E^{i}:f^{i}\left( x^{i},\widehat{x}^{i}\right) =\underset{%
y^{i}\in E^{i}}{\inf }f^{i}\left( y^{i},\widehat{x}^{i}\right) \right\} .
\end{equation*}

A non-cooperative equilibrium (or Nash equilibrium) is a fixed point of the
set-valued map%
\begin{equation*}
C:E\rightrightarrows E,C\left( x\right) =\prod\limits_{i=1}^{n}C^{i}\left(
\widehat{x}^{i}\right) .
\end{equation*}

As shown in \cite{Aubin}, Nash equilibria can be characterized using the map
$\varphi :E\times E\rightarrow \mathbb{R}$ defined by%
\begin{equation*}
\varphi \left( x,y\right) =\sum\limits_{i=1}^{n}\left( f^{i}\left( x^{i},%
\widehat{x}^{i}\right) -f^{i}\left( y^{i},\widehat{x}^{i}\right) \right) .
\end{equation*}

\begin{lem}[\protect\cite{Aubin}]
A multistrategy $x_{0}\in E$ is a non-cooperative equilibrium if and only if%
\begin{equation*}
\varphi \left( x_{0},y\right) \leq 0,\quad \forall y\in E.
\end{equation*}
\end{lem}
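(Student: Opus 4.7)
The plan is to unfold both sides of the equivalence directly from the definitions, exploiting the fact that the sum structure of $\varphi$ decouples across players once we test against multistrategies that differ from $x_0$ in only one coordinate.

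For the forward implication, I would start from the assumption that $x_0$ is a non-cooperative equilibrium, i.e.\ $x_0 \in C(x_0)$, which by the product definition of $C$ is equivalent to $x_0^i \in C^i(\widehat{x}_0^i)$ for every $i \in \{1,\ldots,n\}$. Unfolding the definition of $C^i$, this means
\[
f^i(x_0^i,\widehat{x}_0^i) \le f^i(y^i,\widehat{x}_0^i) \qquad \forall\, y^i \in E^i,\ \forall\, i.
\]
Summing these $n$ nonpositive differences for any fixed $y = (y^1,\ldots,y^n) \in E$ then yields at once $\varphi(x_0,y) \le 0$.

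For the converse, I would assume $\varphi(x_0,y) \le 0$ for every $y \in E$ and recover the coordinatewise minimization condition by a clever choice of test points. Fix $i \in \{1,\ldots,n\}$ and an arbitrary $y^i \in E^i$, and apply the hypothesis to
\[
y = (x_0^1,\ldots,x_0^{i-1},y^i,x_0^{i+1},\ldots,x_0^n).
\]
For every index $j \neq i$ the summand $f^j(x_0^j,\widehat{x}_0^j) - f^j(y^j,\widehat{x}_0^j)$ vanishes because $y^j = x_0^j$, so $\varphi(x_0,y)$ collapses to the single term $f^i(x_0^i,\widehat{x}_0^i) - f^i(y^i,\widehat{x}_0^i)$, which must then be $\le 0$. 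Since $y^i$ was arbitrary, $x_0^i$ minimizes $f^i(\cdot,\widehat{x}_0^i)$ on $E^i$, i.e.\ $x_0^i \in C^i(\widehat{x}_0^i)$; letting $i$ range over $\{1,\ldots,n\}$ gives $x_0 \in C(x_0)$.

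There is really no serious obstacle here: the statement is a bookkeeping lemma, and the only nontrivial ingredient is the single-coordinate perturbation in the converse direction, which isolates the $i$-th term of the sum and turns a global inequality into the per-player minimization property defining $C^i$.
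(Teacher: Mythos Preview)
Your proof is correct and is precisely the standard argument for this characterization. Note that the paper does not actually prove this lemma: it is quoted from \cite{Aubin} and stated without proof, so there is no in-paper argument to compare against; your write-up supplies exactly the expected details.
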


Now we can verify the existence of non-cooperative equilibria under
convexity assumptions formulated on self segment-dense subsets of the
strategy sets. This generalizes the classical result of Nash (see \cite%
{Aubin} Theorem 12.2) by allowing that the convexity is violated on small
sets.

\begin{thm}
Suppose that for any $i\in \left\{ 1,\ldots ,n\right\},$ the sets $E^{i}$
are convex and compact and let $D^{i}\subseteq E^{i}$ be self segment-dense
subsets of $E^{i}$. Assume further that for every $i\in \left\{ 1,\ldots
,n\right\}$ the following assumptions hold:

\begin{itemize}
\item[(i)] $f^i$ is lower semicontinuous on $E,$

\item[(ii)] for all $y^i\in D^i$ the map $\widehat{x}^{i}\longrightarrow
f^i(y^i,\widehat{x}^{i})$ is upper semicontinuous on $\widehat{E}^{i},$

\item[(iii)] for all $\widehat{x}^{i}\in \widehat{E}^{i}$ the map $%
y^i\longrightarrow f^i(y^i,\widehat{x}^{i})$ is upper semicontinuous on $%
E^i\setminus D^i,$

\item[iv)] for all $\widehat{x}^{i}\in \widehat{D}^{i} $ the map $%
y^i\longrightarrow f^i(y^i,\widehat{x}^{i})$ is convex on $D^i.$
\end{itemize}

Then there exists a non-cooperative equilibrium.
\end{thm}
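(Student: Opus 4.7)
The plan is to invoke the characterization lemma above and reduce the existence of a Nash equilibrium to an application of Theorem~\ref{c51} to the bifunction $\varphi$ on $K \times K$, where $K := E = \prod_{i=1}^{n} E^{i}$ is convex and compact and its self segment-dense subset is taken to be $D := \prod_{i=1}^{n} D^{i}$. Any $x_{0} \in E$ satisfying $\varphi(x_{0}, y) \leq 0$ for all $y \in E$ is then, by that lemma, a non-cooperative equilibrium.

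The first preparatory step is to verify that $D$ is self segment-dense in $E$. Density in the product topology is immediate from the density of each $D^{i}$ in $E^{i}$. For the segment property, given $u, v \in D$, a point $(1-t)u + tv$ belongs to $D$ precisely when $(1-t)u^{i} + tv^{i} \in D^{i}$ for every $i$; using the fact that in one dimension self segment-denseness coincides with denseness, a coordinatewise argument should yield that $[u, v] \cap D$ is dense in $[u, v]$.

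Next I would verify the four hypotheses of Theorem~\ref{c51} for the triple $(E, D, \varphi)$. Hypothesis (iv) is immediate since $\varphi(x, x) = 0$ by construction. Concavity of $y \longrightarrow \varphi(x, y)$ on $D$ for $x \in D$ (hypothesis (iii)) follows from the convexity of each $y^{i} \longrightarrow f^{i}(y^{i}, \widehat{x}^{i})$ on $D^{i}$ when $\widehat{x}^{i} \in \widehat{D}^{i}$, which is our assumption (iv). For the lower semicontinuity of $x \longrightarrow \varphi(x, y)$ on $E$ when $y \in D$ (hypothesis (i)), one combines the joint lower semicontinuity of each $f^{i}$ on $E$ (our (i)) with the upper semicontinuity of $\widehat{x}^{i} \longrightarrow f^{i}(y^{i}, \widehat{x}^{i})$ for $y^{i} \in D^{i}$ (our (ii)), noting that the negative of an upper semicontinuous function is lower semicontinuous. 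For the lower semicontinuity of $y \longrightarrow \varphi(x, y)$ on $E \setminus D$ (hypothesis (ii)), one appeals to the upper semicontinuity of each $y^{i} \longrightarrow f^{i}(y^{i}, \widehat{x}^{i})$ on $E^{i} \setminus D^{i}$ (our (iii)).

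Once these conditions are secured, Theorem~\ref{c51} produces some $x_{0} \in E$ with $\varphi(x_{0}, y) \leq 0$ for all $y \in E$, and the characterization lemma identifies $x_{0}$ as a non-cooperative equilibrium. The most delicate points of the program are (a) establishing that the product of self segment-dense sets is self segment-dense, which requires combining the one-dimensional self segment-denseness across coordinates, and (b) securing the upper semicontinuity in $y$ needed for hypothesis (ii) of Theorem~\ref{c51} at mixed points $y \in E \setminus D$ whose coordinates lie partly in $D^{i}$ and partly in $E^{i} \setminus D^{i}$, since for those coordinates that happen to lie in $D^{i}$ our hypothesis (iii) does not directly apply and one must work coordinate by coordinate to recover the required semicontinuity of the sum.
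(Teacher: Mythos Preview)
Your plan mirrors the paper's proof exactly: the authors also take $K=E$, set $D=\prod_{i=1}^n D^i$, assert that $D$ is self segment-dense in $E$, and then invoke Theorem~\ref{c51} for $\varphi$ together with the characterization lemma. Their argument is in fact terser than yours---it simply states that ``the assumptions (i)--(iv) assure that the hypotheses (i)--(iii) of Theorem~\ref{c51} are satisfied'' and that $\varphi(x,x)=0$, without elaborating on the verifications you spell out.

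Your instinct that point (a) is the crux is well placed, and the difficulty is genuine: a finite product of self segment-dense sets need \emph{not} be self segment-dense, so the coordinatewise argument you sketch cannot work in general. Indeed, with $E^1=E^2=[0,1]$, $D^1=\mathbb{Q}\cap[0,1]$ and $D^2=([0,1]\setminus\mathbb{Q})\cup\{0,1\}$, each $D^i$ is dense and hence (by the one-dimensional remark in the paper) self segment-dense in $[0,1]$; yet the segment joining $(0,0)$ and $(1,1)$ meets $D^1\times D^2$ only at its endpoints, since $(t,t)\in D^1\times D^2$ forces $t$ to be simultaneously rational and either irrational or in $\{0,1\}$. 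The paper's bare assertion that $\prod_i D^i$ is self segment-dense thus carries the same gap you would inherit. Your concern (b)---that at a point $y\in E\setminus D$ some coordinates may still lie in $D^i$, where hypothesis~(iii) of the theorem gives no upper semicontinuity of $y^i\mapsto f^i(y^i,\widehat{x}^i)$---is likewise a real issue that the paper does not address.
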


\begin{proof}
The theorem follows from Theorem  \ref{c51}.

We consider the set $E$ and the function $\varphi $ defined above. The set $E
$, being a product of compact and convex sets is itself compact and convex, meanwhile the set
$D=\prod\nolimits_{i=1}^{n}D^{i}$ is self segment-dense in $E.$

The assumptions $(i)-(iv)$ assures that the
hypotheses $(i)-(iii)$ of Theorem \ref{c51} are satisfied.  Also, we have that $\varphi \left( x,x\right) =0$ for any
$x\in E$, so the characterization of Nash equilibria together with the
aforementioned abstract result guarantee the existence of a non-cooperative
equilibrium point.
\end{proof}


\section{Conclusions}


In this paper, we provide existence results for equilibrium problems (both single- and setvalued) under convexity and continuity assumptions that do not hold on the whole domain but on a special type of subset that we call self segment-dense. By a counterexample, we show that taking any dense subset is not enough, and that the new concept of a self segment-dense subset is essential in this context. We underline that Theorems \ref{t31} and \ref{t511} extend the original results of A. Krist\'aly and Cs. Varga \cite{KV}, who impose conditions on the whole domain.

By means our aforementioned Ky-Fan-type results we can prove:
a) The existence of an economic equilibrium when the constraint imposed by Walras' law holds just on a self segment-dense subset of the price simplex.
b) The existence of a Nash equilibrium for a non-cooperative $n$-person game under the assumption that the loss function of each player is convex on a self segment-dense subset of the set of strategies, not on the whole set.\\

{\bf Acknowledgements} {This work was supported by a grant of the Romanian Ministry of Education,
CNCS - UEFISCDI, project number PN-II-RU-PD-2012-3 -0166.\\
This research was supported by a grant of the Romanian National Authority
for Scientific Research CNCS - UEFISCDI, project number
PN-II-ID-PCE-2011-3-0094.}

\end{document}